\documentclass[11pt]{article}

\usepackage[T1]{fontenc}
\usepackage[utf8]{inputenc}
\usepackage{lmodern}
\usepackage{amsmath,amssymb,amsthm,mathtools}
\usepackage[a4paper,margin=30mm]{geometry}
\usepackage{microtype}
\usepackage{enumitem}
\usepackage[colorlinks=true,linkcolor=blue,citecolor=blue,urlcolor=blue]{hyperref}

\newtheorem{theorem}{Theorem}[section]
\newtheorem{proposition}[theorem]{Proposition}
\newtheorem{corollary}[theorem]{Corollary}
\newtheorem{lemma}[theorem]{Lemma}
\theoremstyle{remark}
\newtheorem{remark}[theorem]{Remark}

\newcommand{\Will}{\mathcal W}
\newcommand{\R}{\mathbb R}
\newcommand{\Sph}{\mathbb S}
\newcommand{\dd}{\,\mathrm d}
\newcommand{\Area}{\operatorname{Area}}
\newcommand{\spt}{\operatorname{spt}}
\newcommand{\BL}{\mathrm{BL}}
\newcommand{\Var}{\mathbf V}
\newcommand{\TV}{\mathrm{TV}}

\numberwithin{equation}{section}

\title{Uniqueness of Finite-Time Varifold Limits for the
M\"obius-Invariant Willmore Flow}
\author{
Mohameden Ahmedou\thanks{Mathematisches Institut, Justus-Liebig-Universit\"at
Gie\ss en, Arndtstrasse 2, 35392 Gie\ss en, Germany.
E-mail: \texttt{Mohameden.Ahmedou@math.uni-giessen.de}.}
\and
Ruben Jakob\thanks{Mathematics Department, Technion--Israel Institute of
Technology, 3200003 Haifa, Israel.
E-mail: \texttt{rubenj@technion.ac.il}.}}
\date{August 31, 2026}

\hypersetup{
 pdfauthor={Mohameden Ahmedou and Ruben Jakob},
 pdftitle={Uniqueness of Finite-Time Varifold Limits for the
 M\"obius-Invariant Willmore Flow}
}

\begin{document}

\maketitle

\begin{abstract}
We prove uniqueness of finite-time geometric endpoints for the
M\"obius-invariant Willmore flow in $\Sph^3$ under uniform quantitative
nonumbilicity.  The multiplicity-counting varifolds converge, without
reparametrization or M\"obius renormalization, to a unique integral
two-varifold.  An intrinsic transport estimate gives quantitative
total-variation convergence of the induced area measures on the fixed
domain and bounded-Lipschitz Cauchy control of their pushforwards.
Together with Allard compactness and rectifiability, this upgrades
subsequential compactness to full-trajectory varifold convergence.  The
limit has generalized Euclidean mean curvature in $L^2$ with the
natural endpoint lower-semicontinuity bound.  For finite maximal
trajectories with initial energy at most $8\pi$, Jakob's subsequential
alternative becomes sequence independent: the limit is zero, or it
has unit density and embedded Lipschitz support of genus zero or one.
For Hopf-torus trajectories under the same energy bound,
nonumbilicity is automatic and the anchored constant-speed profiles
converge weakly in $W^{2,2}$ and strongly in $W^{1,2}$ and
$C^{1,\alpha}$ for every $\alpha<\frac12$.  At infinite time, the same
method yields a unique limit under an additional finite-dissipation-length
condition.  
\end{abstract}

\medskip
\noindent\textbf{Keywords.}
M\"obius-invariant Willmore flow; finite-time limit; varifold
convergence; image-measure transport; nonumbilicity; Hopf torus;
conformal modulus.

\smallskip
\noindent\textbf{2020 Mathematics Subject Classification.}
Primary 53E40; Secondary 49Q20, 53C42.

\section{Introduction and main results}

\subsection{The Willmore functional and the classical flow}

A recurring issue in the analysis of geometric flows is the gap
between subsequential compactness and convergence of the complete
trajectory.  Energy and curvature bounds may provide geometric cluster
points as time approaches an endpoint, but they do not by themselves
exclude different time sequences from selecting different limits.  In
parametric surface flows this problem is compounded by the freedom of
reparametrization and by the possible noncompactness of the ambient
symmetry group.  The purpose of this paper is to identify a direct
measure-transport mechanism that removes this sequence dependence for
the M\"obius-invariant Willmore flow under quantitative
nonumbilicity.

For a closed immersed surface $F\colon\Sigma\to\Sph^3$, where
$\Sph^3\subset\R^4$ is the unit round three-sphere with metric
$g_{\Sph^3}$, the Willmore functional in the normalization used below is
\[
 \Will(F)=\int_\Sigma\left(1+\frac14|H|^2\right)\dd\mu.
\]
Here $H$ is the trace mean-curvature vector in $\Sph^3$ and
$\mu=\mu_{F^*g_{\Sph^3}}$ is the induced area measure.
Equivalently, after stereographic projection, this is the Euclidean
Willmore energy.  Its invariance under conformal transformations of the
ambient sphere is the source of both its geometric rigidity and its
noncompactness.  The Li--Yau inequality \cite{LiYau} shows, in
particular, that the threshold $8\pi$ excludes self-intersections for
closed immersions of smaller energy.  For tori, the Willmore conjecture
proved by Marques and Neves \cite{MarquesNeves} identifies $2\pi^2$ as
the optimal lower bound and characterizes equality by the conformal
Clifford-torus family.  These two thresholds play different roles: the
first is a compactness and multiplicity threshold, whereas the second
is the variational minimum among all closed immersed surfaces in
$\Sph^3$ of genus at least one.

The negative $L^2$-gradient flow of $\Will$ is the classical Willmore
flow.  It was introduced independently by Kuwert--Sch\"atzle
\cite{KuwertSchaetzleSmall} and Simonett \cite{Simonett} in 2001.
Kuwert and Sch\"atzle established the small-energy theory and developed
the general fourth-order evolution and its concentration analysis in
\cite{KuwertSchaetzleGradient}; later refinements include the sharp
small-energy asymptotics of Kuwert--Scheuer \cite{KuwertScheuer}.
The global picture is nevertheless subtle.  Embeddedness need not be
preserved \cite{MayerSimonett}, and finite-time singular examples are
known \cite{Blatt}.  Dall'Acqua, M\"uller, Sch\"atzle, and Spener
\cite{DallAcquaMuellerSchaetzleSpener} obtained a detailed analysis for
tori of revolution, including degenerating examples.  In the
parametric formulation of Palmurella and Rivi\`ere
\cite{PalmurellaRiviereApproach,PalmurellaRiviereFlow}, finite-time loss
of compactness for the classical flow is organized into energy
concentration, possibly producing a branch point, and degeneration of
the conformal class.  We refer to \cite{LanMartinoRiviere} for a broad
account of the classical flow and of the surrounding Willmore theory.

Compactness for sequences with bounded Willmore energy is also built
on the weak theory of conformal immersions.  The results of
Kuwert--Li \cite{KuwertLi}, Rivi\`ere
\cite{RiviereAnalysis,RiviereDegenerating}, and Sch\"atzle
\cite{SchaetzleConformalFactor} separate convergence of the geometric
image from possible changes of parametrization and conformal
structure.  At the level of geometric images, Allard's compactness
theorem \cite{Allard} provides the natural integral-varifold framework.
It is precisely at this parametrization-independent level that the
endpoint question considered here is most naturally formulated.

\subsection{The M\"obius-invariant Willmore flow}

The evolution studied here is not the classical Willmore flow.  The
M\"obius-invariant Willmore flow (MIWF) is
\[
 \partial_tF=-|A^0|^{-4}\nabla_{L^2}\Will(F),
\]
where $A^0$ denotes the trace-free second fundamental form.  The
weight $|A^0|^{-4}$ corrects the conformal scaling of the classical
gradient and makes the evolution covariant under fixed M\"obius
transformations without a rescaling of time.  The price is a singular
coefficient at umbilic points.  Since an umbilic-free compact oriented
surface has vanishing Euler characteristic, the natural closed domain
for the flow is a torus.

The analytic development of the MIWF began with Jakob's short-time
existence and uniqueness theorem \cite{JakobShortTime}; functional
analytic properties and regularity were established in
\cite{JakobRegularity}.  The Hopf fibration relates distinguished
toroidal trajectories to fourth-order curve flows on $\Sph^2$.  The
corresponding curve geometry and reduction mechanisms were developed
in Jakob's work on Willmore flow of Hopf tori \cite{JakobHopf}, building
on Pinkall's description of Hopf tori as inverse images of closed
curves under the Hopf map \cite{Pinkall}.  We use only the equivariance
and geometric reduction formulas that are unaffected by the later
corrections in \cite{JakobHopfCorrection}; in particular, we do not use
the corrected assertions in parts \textup{(II)}--\textup{(III)} of
Theorem~1 or in the second part of Proposition~6 of \cite{JakobHopf}.
Under global compactness hypotheses, Jakob
proved global existence, full convergence, and stability results for
the MIWF in \cite{JakobGlobal}.

The endpoint compactness theory is markedly less rigid.  Jakob's
subsequent theorem \cite[Theorem~1.1]{Jakob} treats toroidal MIWF
trajectories in $\Sph^3$ with initial energy at most $8\pi$.  From
every sequence of times approaching the maximal time it extracts a
subsequence whose multiplicity-one image measures converge weakly to
the weight of an integral rectifiable varifold.  The limit is zero, or
its support is a closed embedded orientable Lipschitz surface of genus
zero or one; in the genus-one case the support admits a uniformly
conformal bi-Lipschitz parametrization in
$W^{2,2}\cap W^{1,\infty}$.  For trajectories in the Hopf sector every
nonzero subsequential support is an embedded $C^1$ Hopf torus
\cite[Theorem~1.2]{Jakob}.  These results identify all possible
subsequential endpoints, but they do not show that two sequences of
times along the same trajectory have the same limit.

\subsection{Main result and relation to the preceding theory}

We close this gap by estimating the evolution of the geometric image
measure itself.  The resulting estimate is independent of the choice
of coordinates on the torus and does not require a time-dependent
M\"obius normalization.  Under a uniform quantitative nonumbilicity
hypothesis it makes the image measures Cauchy at every finite endpoint;
integral compactness then upgrades the measure limit to a unique
varifold limit.  Our principal result is the following.

\begin{theorem}[Unique finite-time limit]
\label{thm:main}
Let $F\colon[0,T)\times\Sigma\to\Sph^3$ be a smooth MIWF of a compact
torus, where $T<\infty$, and write $F_t:=F(t,\cdot)$.  Let $A_t^0$
denote the trace-free second fundamental form of $F_t$.  Put
\[
 W(t):=\Will(F_t),\qquad W_0:=W(0),\qquad
 \mu_t:=\mu_{F_t^*g_{\Sph^3}}.
\]
Assume that
for some $c_0>0$,
\[
 \inf_{0<t<T}\min_{x\in\Sigma}|A_t^0(x)|^2\geq c_0.
\]
Let $\Var_t$ denote the multiplicity-counting integral two-varifold in
$\R^4$ induced by $F_t$, and set
$\lambda_t:=\lVert\Var_t\rVert=(F_t)_\#\mu_t$.
Then there exists a unique integral two-varifold
$\Var_T$ in $\R^4$, possibly the zero varifold, such that, with
$\rightharpoonup$ denoting weak varifold convergence,
\begin{equation}
 \label{eq:main-convergence}
 \Var_t\rightharpoonup\Var_T
 \qquad\text{as }t\nearrow T.
\end{equation}
No reparametrizations of $\Sigma$ and no M\"obius transformations of
$\Sph^3$ are required.

More precisely, there are finite nonnegative Radon measures $\mu_T$ on
$\Sigma$ and $\lambda_T=\lVert\Var_T\rVert$ on $\Sph^3$ such that
$\mu_t\to\mu_T$ in total variation and
$\lambda_t\to\lambda_T$ in the bounded-Lipschitz distance
$d_{\BL}$ defined in \eqref{eq:bl-distance}.  Setting
\[
 W_T:=\lim_{t\nearrow T}W(t),
\]
there holds, for every $0<s<T$,
\begin{align}
 \label{eq:main-endpoint-tv}
 \lVert\mu_T-\mu_s\rVert_{\TV}
 &\leq \frac{2\sqrt{W_0}}{c_0}
 \sqrt{(T-s)\bigl(W(s)-W_T\bigr)},\\
 \label{eq:main-endpoint-bl}
 d_{\BL}(\lambda_T,\lambda_s)
 &\leq \frac{2\sqrt{W_0}}{c_0}
 \sqrt{(T-s)\bigl(W(s)-W_T\bigr)}.
\end{align}
Moreover, $\Var_T$ has generalized Euclidean mean-curvature vector
$H_{\Var_T}^{\R^4}\in L^2(\lVert\Var_T\rVert;\R^4)$ and
\begin{equation}
 \label{eq:main-curvature-lsc}
 \frac14\int_{\R^4}|H_{\Var_T}^{\R^4}|^2
 \dd\lVert\Var_T\rVert\leq W_T.
\end{equation}
\end{theorem}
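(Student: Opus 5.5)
Our plan is to run everything through the energy identity and a pointwise transport estimate for the area density on the fixed torus $\Sigma$. Along the MIWF the normal velocity is $\partial_t F=-|A^0|^{-4}\nabla_{L^2}\Will(F)$, so the energy satisfies
\[
 \frac{d}{dt}W(t)=-\int_\Sigma |A_t^0|^{-4}|\nabla_{L^2}\Will(F_t)|^2\dd\mu_t .
\]
Thus $W$ is nonincreasing, $W_T$ exists, and for $s<T$
\[
 \int_s^T\!\!\int_\Sigma|A^0|^4|\partial_tF|^2\dd\mu\dd t=W(s)-W_T .
\]
For a normal variation $\partial_t F=V\nu$, the area density obeys $\partial_t\mu_t=-\langle H,V\nu\rangle\mu_t$. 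Hence
\[
 \lVert\mu_{t_2}-\mu_{t_1}\rVert_{\TV}\le\int_{t_1}^{t_2}\!\!\int_\Sigma|H||\partial_tF|\dd\mu\dd t .
\]
If $F$ carries a tangential component, we will first note that it only reparametrizes; better, we use that for the normal MIWF it vanishes.

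The key step is Cauchy--Schwarz with the nonumbilicity weight. We write $|H||\partial_tF|=\bigl(|H||A^0|^{-2}\bigr)\bigl(|A^0|^2|\partial_tF|\bigr)$ and use $|A^0|^{-2}\le c_0^{-1}$. This gives
\[
 \int_{t_1}^{t_2}\!\!\int|H||\partial_tF|
 \le c_0^{-1}\Bigl(\int_{t_1}^{t_2}\!\!\int|H|^2\dd\mu\dd t\Bigr)^{1/2}\bigl(W(t_1)-W(t_2)\bigr)^{1/2}.
\]
Since $\int|H|^2\dd\mu\le4W(t)\le4W_0$, the first factor is at most $2\sqrt{W_0(t_2-t_1)}$. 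We then let $t_2\nearrow T$; a Cauchy sequence in total variation on the finite measures of $\Sigma$ converges. This produces $\mu_T$ and \eqref{eq:main-endpoint-tv}, including the constant $2\sqrt{W_0}/c_0$.

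For the image measures we test $\lambda_t$ against $\varphi$ with $\lVert\varphi\rVert_\infty+\mathrm{Lip}(\varphi)\le1$. Then
\[
 \frac{d}{dt}\int\varphi\circ F_t\dd\mu_t=\int\bigl(\nabla\varphi\cdot\partial_tF-\varphi\langle H,\partial_tF\rangle\bigr)\dd\mu_t .
\]
The second term is handled as above. The first term is the main obstacle, because it involves $|\partial_tF|$ without an $H$ factor. We plan to bound it by $c_0^{-1}\mu_t(\Sigma)^{1/2}\bigl(\int|A^0|^4|\partial_tF|^2\bigr)^{1/2}$ and use $\mu_t(\Sigma)\le W_0$. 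The one combined pointwise factor is $|\nabla\varphi|+|\varphi||H|\le(1+|H|^2)^{1/2}$, because $\lVert\varphi\rVert_\infty+\mathrm{Lip}\le1$ (here "$\le1$" means the normalization in \eqref{eq:bl-distance}). Its integral is $\int(1+|H|^2)\le4W$, which keeps the same constant $2\sqrt{W_0}/c_0$. This gives the BL Cauchy property and \eqref{eq:main-endpoint-bl}. The limit $\lambda_T$ exists because BL is complete on finite measures with bounded mass in compact $\Sph^3$.

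Finally we identify the varifold. The $\Var_t$ have bounded mass $\le W_0$ and, via the Euclidean mean curvature $H^{\R^4}=H-2F$, first variation bounded in $L^2$. So Allard's compactness theorem gives, along any $t_k\nearrow T$, a subsequence converging to an integral varifold. Its weight must be $\lambda_T$. We then need to upgrade weight uniqueness to varifold uniqueness. An integral varifold with an $L^2$ generalized mean curvature has tangent planes determined a.e. by its weight measure (rectifiability and density, using Allard/Ambrosio--Kirchheim), so the limit $\Var_T$ is determined by $\lambda_T$. Every sequence therefore has the same limit, which gives \eqref{eq:main-convergence}. The estimate \eqref{eq:main-curvature-lsc} comes from lower semicontinuity of $\int|H^{\R^4}|^2$ under varifold convergence with bounded first variation. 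We combine this with $\frac14\int|H^{\R^4}|^2\dd\lambda_t=\frac14\int(|H|^2+4)\dd\mu_t=W(t)$ and pass to $W_T$.
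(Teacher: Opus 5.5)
Your proposal is correct and follows essentially the same route as the paper. It uses the area-evolution total-variation estimate and the push-forward transport estimate with the weight split $(a_t^{-1}|H|)(a_t^{-1}|G_t|)$, then completeness and weak-* compactness to obtain $\mu_T$ and $\lambda_T$, then Allard compactness, the fact that a rectifiable varifold is determined by its weight (the $L^2$ mean-curvature bound is not actually needed for this step), and finally lower semicontinuity of the $L^2$ mean curvature. The differences are cosmetic. You use a single space-time Cauchy--Schwarz and the pointwise bound $|\nabla\varphi|+|\varphi||H|\le(1+|H|^2)^{1/2}$, both yielding the same constant $2\sqrt{W_0}/c_0$, and you cite lower semicontinuity where the paper proves it via the Riesz representation in $L^2(\lambda_T)$.
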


When $T=T_{\max}(F_0)<\infty$, we call the resulting maximal
trajectory a \emph{singular flow line}, consistently with
\cite[Definition~2.1]{Jakob}.  The theorem itself is slightly more
general, since it applies on every finite interval on which the stated
uniform nonumbilicity bound holds.

The theorem concerns the original, unrenormalized trajectory.  Its
main novelty is not the existence of cluster varifolds but a mechanism
that forces all cluster varifolds to coincide.  The intrinsic
transport identity used below follows the pushforward
$(F_t)_\#\mu_{F_t^*g_{\Sph^3}}$ and therefore records simultaneously
the motion of the immersion and the variation of its area element.  It
yields the quantitative endpoint estimates
\eqref{eq:main-endpoint-tv}--\eqref{eq:main-endpoint-bl}, with a bound
expressed solely through the remaining energy dissipation.  Once the
weight measure is unique, integral compactness and rectifiability
determine the approximate tangent plane almost everywhere and hence
the limiting varifold.  This is why an estimate only for the area
measures on the fixed parameter domain would be insufficient.

The argument also retains geometric information at the endpoint.  The
uniform Willmore bound passes to an $L^2$ bound for the generalized
Euclidean mean curvature of the limit, giving
\eqref{eq:main-curvature-lsc}.  Combining
Theorem~\ref{thm:main} with \cite[Theorem~1.1]{Jakob}, the zero,
genus-zero, and genus-one alternatives below the $8\pi$ threshold
become alternatives for the complete trajectory rather than for an
extracted subsequence.  Every nonzero unrenormalized image converges
to the unique support in Hausdorff distance.  In the genus-one case
the limiting parametrizations constructed in \cite{Jakob} represent
the same multiplicity-one varifold, although no uniqueness of
parametrization is asserted.

The Hopf sector provides a geometric class in which the hypothesis of
Theorem~\ref{thm:main} is automatic: if $\kappa$ is the geodesic
curvature of the profile curve, then
$|A^0|^2=2(1+\kappa^2)\geq2$.  The unique nonzero finite-time support is
therefore, under the same $8\pi$ energy bound, an embedded $C^1$ Hopf
torus.  More is true after fixing the
orientation, base point, and constant-speed gauge.
Theorem~\ref{thm:anchored-profile} proves convergence of the entire
anchored profile trajectory, and
Corollary~\ref{cor:pinkall-modulus} identifies the resulting limit in
the moduli space through Pinkall's lattice description.

At infinite time, the finite-interval factor in the transport estimate
is no longer bounded.  Proposition~\ref{prop:infinite-finite-length}
isolates the additional condition needed by this method: uniqueness
holds if the square root of the energy dissipation is integrable in
time.  A \L ojasiewicz--Simon inequality would imply such a condition
only after a separate trapping and gauge theorem near a smooth critical
immersion; no such theorem is assumed here.

We finally delimit the finite-time conclusion.  For a general maximal
trajectory with $T_{\max}<\infty$, either quantitative
nonumbilicity fails along a sequence approaching $T_{\max}$, or
Theorem~\ref{thm:main} gives a unique zero, genus-zero, or genus-one
endpoint.  Thus the exhaustive alternative still contains four
branches: an umbilic event and the three geometric endpoints.  The
result is an endpoint theorem, not a continuation theorem.  In
particular, the available weak $W^{2,2}$ and weak-* $W^{1,\infty}$
convergence does not transmit the pointwise lower bound for $|A^0|^2$
to a toroidal limiting parametrization, whereas the existing
short-time theory starts from $W^{4-4/p,p}$ data with $p>3$.  Excluding
a toroidal endpoint or extending the flow beyond $T_{\max}$ would
therefore require additional regularity and well-posedness results.
Likewise, conclusions from the classical parametric Willmore flow
\cite{PalmurellaRiviereApproach,PalmurellaRiviereFlow} cannot be
transferred directly to the MIWF, whose principal weight and conformal
covariance are different.

\subsection{Organization of the paper}

Section~2 fixes the geometric conventions.  Section~3 proves the
intrinsic transport and Cauchy estimates.  Section~4 proves
Theorem~\ref{thm:main} and records a finite-dissipation-length
criterion at infinite time.  Section~5 derives the exhaustive finite-time
endpoint alternative and records the unresolved continuation problem.
Section~6 treats Hopf tori, proves the anchored-profile estimate, and
derives the unique Pinkall-modulus limit.

\section{Setting, conventions, and assumptions}

Let $\Sigma$ be a smooth compact torus and let
\[
 F\colon [0,T)\times\Sigma\longrightarrow \Sph^3\subset\R^4,
 \qquad F_t:=F(t,\cdot),
\]
be a smooth solution of the M\"obius-invariant Willmore flow on its
interval of existence.  The local well-posedness and regularity theory
for this evolution is developed in
\cite{JakobShortTime,JakobRegularity}.  Here $\Sph^3$ is the unit
sphere, $g_{\Sph^3}$ is its round metric, and $g_{\mathrm{euc}}$ is the
Euclidean metric of $\R^4$.  When the interval is maximal, we denote
its right endpoint by $T_{\max}(F_0)$.  We write
\[
 g_t:=F_t^*g_{\Sph^3}=F_t^*g_{\mathrm{euc}},
 \qquad \mu_t:=\mu_{g_t},
 \qquad \dd\mu_t:=\dd\mu_{g_t},
\]
and denote by $A_t$, $A_t^0$, and $H_t$ respectively the second
fundamental form, its trace-free part, and the trace mean-curvature
vector of $F_t$ in $\Sph^3$.  We set
\[
 \Area(F_t):=\mu_t(\Sigma).
\]
Unless an ambient space is displayed explicitly, norms and inner
products of tangent and normal tensors are taken with respect to
$g_t$ and $g_{\Sph^3}$.  We write $\nabla^{\Sph^3}$ for the spherical
gradient.
Our normalization of the
Willmore functional is
\begin{equation}
 \label{eq:willmore}
 \Will(F_t)
 :=\int_\Sigma
 \left(1+\frac14\lvert H_t\rvert^2\right)\dd\mu_t.
\end{equation}
If $H_t^{\R^4}$ denotes the Euclidean trace mean-curvature vector, then
\[
 H_t^{\R^4}=H_t-2F_t,
 \qquad
 \lvert H_t^{\R^4}\rvert^2
 =\lvert H_t\rvert^2+4,
\]
and consequently
\begin{equation}
 \label{eq:euclidean-willmore}
 \Will(F_t)=\frac14\int_\Sigma
 \lvert H_t^{\R^4}\rvert^2\dd\mu_t.
\end{equation}
Throughout the paper we write
\[
 W(t):=\Will(F_t),
 \qquad
 W_0:=W(0)=\Will(F_0).
\]

Let
\[
 G_t:=\nabla_{L^2}\Will(F_t)
\]
be the $L^2(\Sigma,\dd\mu_t)$-gradient of $\Will$, normalized by
\[
 D\Will(F_t)[\phi]
 =\int_\Sigma\langle G_t,\phi\rangle\dd\mu_t
\]
for every smooth normal variation field
$\phi\in\Gamma(N_{F_t}\Sigma)\subset\Gamma(F_t^*T\Sph^3)$.
Here $D\Will(F_t)$ is the first variation of $\Will$,
$N_{F_t}\Sigma$ is the normal bundle of $F_t(\Sigma)$ in $\Sph^3$,
and $\Gamma(E)$ denotes the space of smooth sections of a bundle $E$.
The MIWF equation is
\begin{equation*}
 \label{eq:miwf}
 \partial_tF_t
 =-\frac{1}{\lvert A_t^0\rvert^4}G_t.
 \tag{MIWF}
\end{equation*}
Equivalently, with
\[
 a_t:=\lvert A_t^0\rvert^2,
\]
one has $\partial_tF_t=-a_t^{-2}G_t$.  These conventions give the
dissipation identity
\begin{equation}
 \label{eq:dissipation}
 -W'(t)
 =\int_\Sigma a_t^{-2}\lvert G_t\rvert^2\dd\mu_t.
\end{equation}
Indeed, this follows directly by pairing \eqref{eq:miwf} with $G_t$.
Equivalently, if the spherical Willmore operator is written as
\[
 \mathcal L_{\Sph^3}(F_t)
 :=\Delta_{F_t}^{\perp,\Sph^3}H_t
   +Q(A_t^0)(H_t),
\]
then
\[
 G_t=\frac12\mathcal L_{\Sph^3}(F_t),
 \qquad
 \partial_tF_t=-\frac{1}{2a_t^2}\mathcal L_{\Sph^3}(F_t).
\]
Here
\[
 Q(A_t^0)(H):=g_t^{ik}g_t^{j\ell}
 \langle (A_t^0)_{ij},H\rangle(A_t^0)_{k\ell}.
\]
Here $\Delta_{F_t}^{\perp,\Sph^3}$ is the normal-connection Laplacian
along $F_t$, and $(g_t^{ij})$ is the inverse matrix of $g_t$ in local
coordinates.
Thus the gradient, flow, and dissipation identities use one fixed
normalization; compare the first-variation formula in \cite{Weiner}.

The additional assumption is the uniform nonumbilicity condition
\begin{equation*}
 \label{eq:un}
 \boxed{
 \inf_{0<t<T}\min_{x\in\Sigma}
 \lvert A_t^0(x)\rvert^2\geq c_0>0.}
 \tag{UN}
\end{equation*}
This is stronger than requiring every individual immersion $F_t$ to
be umbilic-free.  Notice also that \eqref{eq:un} is an analytic
condition and is not the same as the ``nondegenerate case'' in
\cite{Jakob}, which means that a nonzero limiting surface has genus
one.

Throughout, ``Radon measure'' has its standard meaning and is scalar and
nonnegative unless it is explicitly described as signed or vector-valued.
In particular, every Radon measure on the compact spaces $\Sigma$ and
$\Sph^3$ is finite.

Let $G_2(\R^4):=\R^4\times G(4,2)$ be the Grassmann bundle of
unoriented two-planes over $\R^4$, and let $C_c(X)$ denote the space of
continuous, compactly supported real-valued functions on $X$.
Writing $\dd F_t$ for the differential of $F_t$, let $\Var_t$ be the
integral two-varifold in $\R^4$ associated with the immersion $F_t$,
defined by
\begin{equation}
 \label{eq:varifold-definition}
 \Var_t(\Phi)
 :=\int_\Sigma
 \Phi\bigl(F_t(x),\dd F_t(T_x\Sigma)\bigr)\dd\mu_t(x),
 \qquad \Phi\in C_c\bigl(G_2(\R^4)\bigr).
\end{equation}
For a Borel map $f$ and a Radon measure $\eta$, define the push-forward
by $(f_\#\eta)(B):=\eta(f^{-1}(B))$ for every Borel set $B$.
Its weight measure is
\begin{equation}
 \label{eq:weight-definition}
 \lambda_t:=\lVert\Var_t\rVert=(F_t)_\#\mu_t.
\end{equation}
For a two-varifold $V$ in $\R^4$, its first variation is the linear
functional
\begin{equation}
 \label{eq:first-variation-definition}
 \delta V(X)
 :=\int_{G_2(\R^4)}\operatorname{div}_S X(x)\dd V(x,S),
 \qquad X\in C_c^1(\R^4;\R^4),
\end{equation}
where $\operatorname{div}_S X$ is the trace of $DX$ restricted to the
plane $S$.  In general $\delta V$ is a distribution.  If it is locally
bounded with respect to the supremum norm on test fields, the
Riesz--Markov theorem, applied componentwise, represents it by an
$\R^4$-valued Radon measure, still denoted by $\delta V$; its scalar
total-variation measure is denoted by $\lVert\delta V\rVert$.  For such
a varifold, the generalized mean-curvature vector $H_V^{\R^4}$ is minus the
$\R^4$-valued Radon--Nikodym derivative of the part of $\delta V$ that
is absolutely continuous with respect to $\lVert V\rVert$.  In the
case considered below, the singular part vanishes and therefore
\begin{equation}
 \label{eq:generalized-mean-curvature-definition}
 \delta V(X)
 =-\int_{\R^4}\langle H_V^{\R^4},X\rangle\dd\lVert V\rVert.
\end{equation}
For any varifold $\Var$ we use the standard abbreviation
$\spt\Var:=\spt\lVert\Var\rVert$.
We regard $\lambda_t$ as a Radon measure on $\Sph^3$ in the transport
calculation and, via the inclusion $\Sph^3\hookrightarrow\R^4$, as the
weight measure of $\Var_t$ in the compactness argument.
This definition keeps the natural multiplicity when $F_t$ is not
injective.  For a countably two-rectifiable set $M$ and a multiplicity
$\theta$, let $v(M,\theta)$ denote the rectifiable varifold carried by
$M$ with multiplicity $\theta$; let $\mathcal H^2$ denote
two-dimensional Hausdorff measure and $\mathcal H^2\llcorner M$ its
restriction to $M$.  If $F_t$ is an embedding, the area formula gives
\begin{equation}
 \label{eq:embedded-varifold}
 \Var_t=v(F_t(\Sigma),1),
 \qquad
 \lambda_t=\mathcal H^2\llcorner F_t(\Sigma).
\end{equation}

For $\varphi\in C^1(\Sph^3)$, put
\[
 \lVert\varphi\rVert_{C^1(\Sph^3)}
 :=\lVert\varphi\rVert_{L^\infty(\Sph^3)}
   +\lVert\nabla^{\Sph^3}\varphi\rVert_{L^\infty(\Sph^3)}.
\]
For positive Radon measures $\alpha$ and $\beta$ on $\Sph^3$, we use
the bounded-Lipschitz distance
\begin{equation}
 \label{eq:bl-distance}
 d_{\BL}(\alpha,\beta)
 :=\sup\left\{
 \left|\int\varphi\dd\alpha-\int\varphi\dd\beta\right|:
 \varphi\in C^1(\Sph^3),\
 \lVert\varphi\rVert_{C^1(\Sph^3)}\leq1
 \right\}.
\end{equation}
For nonempty compact sets $K,L\subset\R^4$, the Hausdorff distance
used below is the Euclidean Hausdorff distance
\begin{equation}
 \label{eq:hausdorff-distance}
 d_{\mathcal H}(K,L)
 :=\max\left\{
 \sup_{x\in K}\inf_{y\in L}|x-y|,
 \sup_{y\in L}\inf_{x\in K}|x-y|
 \right\}.
\end{equation}
Finally, $\rightharpoonup$ denotes weak convergence of Radon measures
or weak varifold convergence, according to context; the latter means
convergence against every test function in $C_c(G_2(\R^4))$.
We abbreviate $L^2(\Sigma,\dd\mu_t)$ to $L^2(\dd\mu_t)$ whenever the
domain is clear.

\section{Intrinsic and image-measure estimates}

\begin{lemma}[Control of the geometric velocity]
\label{lem:velocity}
Assume \eqref{eq:un}.  Then, for every $0<t<T$,
\begin{equation}
 \label{eq:velocity}
 \lVert\partial_tF_t\rVert_{L^2(\Sigma,\dd\mu_t)}^2
 \leq c_0^{-2}
 \bigl(-W'(t)\bigr).
\end{equation}
\end{lemma}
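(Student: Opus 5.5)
The estimate follows by inserting the flow equation into the $L^2$ norm of the velocity and comparing pointwise with the dissipation identity \eqref{eq:dissipation}.

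First, by \eqref{eq:miwf} we have $\partial_tF_t=-a_t^{-2}G_t$. Hence, pointwise on $\Sigma$,
\[
 \lvert\partial_tF_t\rvert^2=a_t^{-4}\lvert G_t\rvert^2
 =a_t^{-2}\cdot a_t^{-2}\lvert G_t\rvert^2 .
\]

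Second, assumption \eqref{eq:un} gives $a_t=\lvert A_t^0\rvert^2\geq c_0$ on all of $\Sigma$ for every $0<t<T$, so $a_t^{-2}\leq c_0^{-2}$. Integrating the pointwise identity against $\dd\mu_t$ then yields
\[
 \lVert\partial_tF_t\rVert_{L^2(\dd\mu_t)}^2
 \leq c_0^{-2}\int_\Sigma a_t^{-2}\lvert G_t\rvert^2\dd\mu_t
 =c_0^{-2}\bigl(-W'(t)\bigr),
\]
where the last equality is \eqref{eq:dissipation}. This is \eqref{eq:velocity}.

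The only point needing care is the dissipation identity itself. It comes from $W'(t)=D\Will(F_t)[\partial_tF_t]=\int_\Sigma\langle G_t,\partial_tF_t\rangle\dd\mu_t$. This requires that $\partial_tF_t$ be normal, or more generally that tangential components contribute nothing, which holds by diffeomorphism invariance of $\Will$. Since $G_t$ is a normal field and \eqref{eq:miwf} prescribes a purely normal velocity, this is immediate. The identity is recorded in the paper before the lemma, so I take it as given, and no genuine obstacle remains.
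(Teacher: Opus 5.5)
Your proposal is correct and follows the paper's proof exactly: you substitute $\partial_tF_t=-a_t^{-2}G_t$, split $a_t^{-4}=a_t^{-2}\cdot a_t^{-2}$, bound one factor by $c_0^{-2}$ using \eqref{eq:un}, and conclude with the dissipation identity \eqref{eq:dissipation}. Your remark on why \eqref{eq:dissipation} holds for a purely normal velocity is consistent with how the paper obtains that identity by pairing \eqref{eq:miwf} with $G_t$.
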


\begin{proof}
By \eqref{eq:miwf},
\[
 \begin{aligned}
 \lVert\partial_tF_t\rVert_{L^2(\dd\mu_t)}^2
 &=\int_\Sigma a_t^{-4}\lvert G_t\rvert^2\dd\mu_t\\
 &=\int_\Sigma a_t^{-2}
       \bigl(a_t^{-2}\lvert G_t\rvert^2\bigr)\dd\mu_t.
 \end{aligned}
\]
Since $a_t\geq c_0$, one has $a_t^{-2}\leq c_0^{-2}$, and the
conclusion follows from \eqref{eq:dissipation}.
\end{proof}

\begin{lemma}[Evolution of the area and its push-forward]
\label{lem:transport}
Let $X_t:=\partial_tF_t$.  Then the induced area measure on the fixed
domain satisfies
\begin{equation}
 \label{eq:domain-evolution}
 \partial_t\dd\mu_t
 =-\langle H_t,X_t\rangle\dd\mu_t
 =a_t^{-2}\langle H_t,G_t\rangle\dd\mu_t.
\end{equation}
Moreover, for every $\varphi\in C^1(\Sph^3)$,
\begin{equation}
 \label{eq:pushforward-evolution}
 \begin{aligned}
 \frac{\dd}{\dd t}\int_{\Sph^3}\varphi\dd\lambda_t
 &=\int_\Sigma
 \left[
  \big\langle\nabla^{\Sph^3}\varphi(F_t),X_t\big\rangle
  -\varphi(F_t)\langle H_t,X_t\rangle
 \right]\dd\mu_t\\
 &=\int_\Sigma a_t^{-2}
 \left[
  -\big\langle\nabla^{\Sph^3}\varphi(F_t),G_t\big\rangle
  +\varphi(F_t)\langle H_t,G_t\rangle
 \right]\dd\mu_t.
 \end{aligned}
\end{equation}
\end{lemma}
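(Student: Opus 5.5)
The lemma combines the first variation of the area element under a normal velocity field with the chain rule for the push-forward. Everything takes place on the fixed domain $\Sigma$, where all quantities are smooth in $(t,x)$, so differentiation under the integral sign is justified by compactness of $\Sigma$ and smoothness of the flow.

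\emph{Step 1: the domain identity \eqref{eq:domain-evolution}.} Since $g_t=F_t^*g_{\Sph^3}$, in local coordinates we have $\partial_t (g_t)_{ij}=\langle\nabla_i X_t,\partial_jF_t\rangle+\langle\partial_iF_t,\nabla_j X_t\rangle$, with $\nabla$ the Levi-Civita connection of $\Sph^3$ pulled back along $F_t$. By \eqref{eq:miwf}, $X_t=-a_t^{-2}G_t$ is a normal section, because $G_t\in\Gamma(N_{F_t}\Sigma)$; note that (UN) guarantees $a_t>0$, so $X_t$ is smooth. For normal $X_t$, differentiating $\langle X_t,\partial_jF_t\rangle=0$ gives $\langle\nabla_iX_t,\partial_jF_t\rangle=-\langle X_t,A_{ij}\rangle$, hence $\partial_t g_{ij}=-2\langle A_{ij},X_t\rangle$. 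Then $\partial_t\sqrt{\det g_t}=\tfrac12 g_t^{ij}\partial_tg_{ij}\sqrt{\det g_t}=-\langle H_t,X_t\rangle\sqrt{\det g_t}$, because $H_t$ is the trace of $A_t$. Substituting $X_t=-a_t^{-2}G_t$ gives the second form of \eqref{eq:domain-evolution}.

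\emph{Step 2: the push-forward identity \eqref{eq:pushforward-evolution}.} By the definition of $\lambda_t=(F_t)_\#\mu_t$, we have $\int_{\Sph^3}\varphi\dd\lambda_t=\int_\Sigma\varphi(F_t)\dd\mu_t$. The integrand $\varphi(F_t(x))\sqrt{\det g_t}$ is $C^1$ in $t$, uniformly in $x$, for $\varphi\in C^1(\Sph^3)$. Differentiating under the integral, the product rule gives two terms. The first is $\partial_t[\varphi(F_t)]=\langle\nabla^{\Sph^3}\varphi(F_t),X_t\rangle$, which is the spherical gradient because $X_t$ is tangent to $\Sph^3$. The second is $\varphi(F_t)\partial_t\dd\mu_t$, which Step 1 evaluates. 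Inserting $X_t=-a_t^{-2}G_t$ yields the second line.

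\emph{Main obstacle.} The only subtle point is the normality of the velocity, which Step 1 relies on. It is built into the normalization of $G_t$ as a normal section. Had a tangential component been present, it would contribute a divergence term; that term integrates to zero for the total area but not against $\varphi(F_t)$. So I would state explicitly that $X_t\perp dF_t(T\Sigma)$ before using the formula $\partial_tg_{ij}=-2\langle A_{ij},X_t\rangle$.
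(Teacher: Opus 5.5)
Your proposal is correct and follows the same route as the paper. Normality of $X_t=-a_t^{-2}G_t$ together with the first variation of the area element gives the domain identity, and differentiating $\int_\Sigma\varphi(F_t)\dd\mu_t$ by the chain rule gives the push-forward identity; your computation $\partial_t g_{ij}=-2\langle A_{ij},X_t\rangle$ just spells out the standard formula the paper cites. One small correction: the lemma does not assume (UN), and it is not needed, since pointwise positivity of $a_t$ (hence smoothness of $X_t$) is already part of having a smooth MIWF solution.
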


\begin{proof}
The MIWF velocity is normal to the immersed surface inside $\Sph^3$.
The first variation of area for a closed immersed surface in $\Sph^3$
therefore gives the first identity in \eqref{eq:domain-evolution}; the
second follows from $X_t=-a_t^{-2}G_t$.  By the definition of the
push-forward measure,
\[
 \int_{\Sph^3}\varphi\dd\lambda_t
 =\int_\Sigma\varphi(F_t)\dd\mu_t.
\]
Differentiating this identity, applying the chain rule to
$\varphi(F_t)$, and using \eqref{eq:domain-evolution} gives the first
line of \eqref{eq:pushforward-evolution}.  Substituting the MIWF
equation gives the second line.  Thus both the motion of the immersion
and the variation of its induced area measure are accounted for.
\end{proof}

We regard $\mu_t$ as the area measure on the fixed domain, so that
\[
 \mu_t(B)=\int_B\dd\mu_t,
 \qquad B\subset\Sigma\ \text{Borel}.
\]
For a signed Radon measure $\sigma$ on $\Sigma$, we use the convention
\[
 \lVert\sigma\rVert_{\TV}
 :=\lvert\sigma\rvert(\Sigma)
 =\sup_{\substack{\psi\in C^0(\Sigma)\\
                   \lVert\psi\rVert_{L^\infty}\leq1}}
   \left|\int_\Sigma\psi\dd\sigma\right|.
\]

\begin{proposition}[Total-variation estimate on the fixed domain]
\label{prop:fixed-tv}
Assume \eqref{eq:un}.  For every $0<s<t<T$,
\begin{equation}
 \label{eq:fixed-tv}
 \lVert\mu_t-\mu_s\rVert_{\TV}
 \leq
 \frac{2\sqrt{W_0}}{c_0}
 \sqrt{(t-s)\bigl(W(s)-W(t)\bigr)}.
\end{equation}
In particular, for every Borel set $B\subset\Sigma$,
\begin{equation}
 \label{eq:borel-tv}
 \lvert\mu_t(B)-\mu_s(B)\rvert
 \leq
 \frac{2\sqrt{W_0}}{c_0}
 \sqrt{(t-s)\bigl(W(s)-W(t)\bigr)}.
\end{equation}
If $W_0\leq8\pi$, the constant $2\sqrt{W_0}/c_0$ in
\eqref{eq:fixed-tv}--\eqref{eq:borel-tv} may be replaced by
$\sqrt{32\pi}/c_0$.
\end{proposition}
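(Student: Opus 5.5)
The plan is to integrate the pointwise evolution law \eqref{eq:domain-evolution} of Lemma~\ref{lem:transport} in time and then apply Cauchy--Schwarz twice: once in space and once in time. To make the total-variation norm computable, I would fix a smooth reference measure on $\Sigma$, for instance $\mu_0$ or $\mu_s$, and write $\dd\mu_\tau=\rho_\tau\dd\mu_0$ with a smooth positive density $\rho$ on $[s,t]\times\Sigma$. By \eqref{eq:domain-evolution}, $\partial_\tau\rho_\tau=a_\tau^{-2}\langle H_\tau,G_\tau\rangle\rho_\tau$. Hence $\mu_t-\mu_s$ has density $\rho_t-\rho_s=\int_s^t\partial_\tau\rho_\tau\dd\tau$. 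Since $[s,t]\subset(0,T)$ is compact and the flow is smooth there, Fubini applies, and
\[
 \lVert\mu_t-\mu_s\rVert_{\TV}=\int_\Sigma|\rho_t-\rho_s|\dd\mu_0
 \leq\int_s^t\int_\Sigma a_\tau^{-2}\,|H_\tau|\,|G_\tau|\dd\mu_\tau\dd\tau.
\]

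For fixed $\tau$, I would use Cauchy--Schwarz in $L^2(\dd\mu_\tau)$ with the split $|H_\tau|\cdot(a_\tau^{-2}|G_\tau|)$. The first factor is controlled through \eqref{eq:willmore}: $\int_\Sigma|H_\tau|^2\dd\mu_\tau\leq4W(\tau)\leq4W_0$, because $W$ is nonincreasing by \eqref{eq:dissipation}. This produces the factor $2\sqrt{W_0}$. The second factor is $\lVert a_\tau^{-2}G_\tau\rVert_{L^2(\dd\mu_\tau)}=\lVert\partial_\tau F_\tau\rVert_{L^2(\dd\mu_\tau)}$, which is at most $c_0^{-1}\sqrt{-W'(\tau)}$ by Lemma~\ref{lem:velocity}. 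It follows that
\[
 \lVert\mu_t-\mu_s\rVert_{\TV}\leq\frac{2\sqrt{W_0}}{c_0}\int_s^t\sqrt{-W'(\tau)}\dd\tau .
\]
A final Cauchy--Schwarz in time gives $\int_s^t\sqrt{-W'}\leq\sqrt{t-s}\,\bigl(\int_s^t(-W')\bigr)^{1/2}=\sqrt{(t-s)(W(s)-W(t))}$, which proves \eqref{eq:fixed-tv}.

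The Borel estimate \eqref{eq:borel-tv} is immediate from $|\sigma(B)|\leq|\sigma|(\Sigma)$ for a signed measure $\sigma$. Under $W_0\leq8\pi$, one has $2\sqrt{W_0}\leq2\sqrt{8\pi}=\sqrt{32\pi}$, which gives the improved constant.

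The only point requiring some care is the first step: identifying the total variation of $\mu_t-\mu_s$ with the $L^1$ norm of the difference of densities, and interchanging the time integral with the absolute value and the space integral. Both are justified by the smoothness of $F$ on the compact set $[s,t]\times\Sigma$, which makes $\rho$ smooth and positive there. The rest is a direct combination of Lemma~\ref{lem:velocity}, Lemma~\ref{lem:transport}, and the monotonicity of $W$.
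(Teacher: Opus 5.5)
Your proposal is correct and follows the paper's proof: integrate \eqref{eq:domain-evolution} against a fixed reference density, then apply Cauchy--Schwarz in space and in time. The only cosmetic difference is in the spatial Cauchy--Schwarz. You split the integrand as $|H_\tau|\cdot a_\tau^{-2}|G_\tau|$ and invoke Lemma~\ref{lem:velocity}. The paper splits it symmetrically as $a_r^{-1}|H_r|\cdot a_r^{-1}|G_r|$ and uses \eqref{eq:dissipation} directly. Both give the same constant $2\sqrt{W_0}/c_0$.
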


\begin{proof}
Fix a smooth positive reference measure $\dd\bar\mu$ on $\Sigma$ and
write $\dd\mu_r=\rho_r\dd\bar\mu$.  From
\eqref{eq:domain-evolution} and the
fundamental theorem of calculus,
\[
 \begin{aligned}
 \lVert\mu_t-\mu_s\rVert_{\TV}
 &=\int_\Sigma\lvert\rho_t-\rho_s\rvert\dd\bar\mu\\
 &\leq\int_s^t\int_\Sigma
 a_r^{-2}\lvert H_r\rvert\lvert G_r\rvert
 \dd\mu_r\dd r.
 \end{aligned}
\]
For each $r$, Cauchy--Schwarz, \eqref{eq:un}, and the identities
\[
 \int_\Sigma\lvert H_r\rvert^2\dd\mu_r
 =4\bigl(W(r)-\Area(F_r)\bigr)\leq4W(r),
 \qquad
 \int_\Sigma a_r^{-2}\lvert G_r\rvert^2\dd\mu_r=-W'(r),
\]
give
\[
 \begin{aligned}
 \int_\Sigma a_r^{-2}\lvert H_r\rvert\lvert G_r\rvert
 \dd\mu_r
 &\leq
 \left(\int_\Sigma a_r^{-2}\lvert H_r\rvert^2\dd\mu_r\right)^{1/2}
 \left(\int_\Sigma a_r^{-2}\lvert G_r\rvert^2\dd\mu_r\right)^{1/2}\\
 &\leq
 \frac{2\sqrt{W(r)}}{c_0}
 \sqrt{-W'(r)}\\
 &\leq
 \frac{2\sqrt{W_0}}{c_0}\sqrt{-W'(r)}.
 \end{aligned}
\]
A second application of Cauchy--Schwarz, now in time, yields
\[
 \begin{aligned}
 \lVert\mu_t-\mu_s\rVert_{\TV}
 &\leq\frac{2\sqrt{W_0}}{c_0}
 \int_s^t\sqrt{-W'(r)}\dd r\\
 &\leq\frac{2\sqrt{W_0}}{c_0}
 \sqrt{t-s}
 \sqrt{\int_s^t-W'(r)\dd r},
 \end{aligned}
\]
which is \eqref{eq:fixed-tv}.  The estimate for a Borel set follows from
$\lvert(\mu_t-\mu_s)(B)\rvert\leq
\lVert\mu_t-\mu_s\rVert_{\TV}$.
\end{proof}

The preceding estimate controls measures on the fixed domain.  To
control their push-forwards one must also estimate the motion of the
maps $F_t$.

\begin{proposition}[Bounded-Lipschitz estimate for the image measures]
\label{prop:image-bl}
Under \eqref{eq:un}, for every $0<s<t<T$ and every
$\varphi\in C^1(\Sph^3)$,
\begin{equation}
 \label{eq:image-bl}
 \left|\int_{\Sph^3}\varphi\dd\lambda_t
       -\int_{\Sph^3}\varphi\dd\lambda_s\right|
 \leq
 \frac{2\sqrt{W_0}}{c_0}\,
 \lVert\varphi\rVert_{C^1(\Sph^3)}
 \sqrt{(t-s)\bigl(W(s)-W(t)\bigr)},
\end{equation}
where the $C^1(\Sph^3)$-norm is the one fixed above
\eqref{eq:bl-distance}.
\end{proposition}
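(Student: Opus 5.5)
The plan is to mirror the proof of Proposition~\ref{prop:fixed-tv}, starting from the pushforward evolution identity \eqref{eq:pushforward-evolution} of Lemma~\ref{lem:transport} instead of \eqref{eq:domain-evolution}. Fix $\varphi\in C^1(\Sph^3)$ and $0<s<t<T$. The flow is smooth on $[s,t]\times\Sigma$, so $r\mapsto\int\varphi\dd\lambda_r=\int_\Sigma\varphi(F_r)\dd\mu_r$ is $C^1$ on $[s,t]$. By the fundamental theorem of calculus and the second line of \eqref{eq:pushforward-evolution},
\[
 \left|\int\varphi\dd\lambda_t-\int\varphi\dd\lambda_s\right|
 \leq\int_s^t\int_\Sigma a_r^{-2}\lvert G_r\rvert
 \Bigl(\lvert\nabla^{\Sph^3}\varphi(F_r)\rvert
 +\lvert\varphi(F_r)\rvert\,\lvert H_r\rvert\Bigr)\dd\mu_r\dd r .
\]

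For fixed $r$ I will apply Cauchy--Schwarz with the weight split as $a_r^{-1}\lvert G_r\rvert\cdot a_r^{-1}(\dots)$. The first factor gives $\sqrt{-W'(r)}$ by \eqref{eq:dissipation}. In the second factor, \eqref{eq:un} gives $a_r^{-2}\leq c_0^{-2}$. Minkowski's inequality in $L^2(\dd\mu_r)$ then bounds it by
\[
 c_0^{-1}\Bigl(\lVert\nabla^{\Sph^3}\varphi\rVert_{L^\infty}\Area(F_r)^{1/2}
 +\lVert\varphi\rVert_{L^\infty}\lVert H_r\rVert_{L^2(\dd\mu_r)}\Bigr).
\]
Now use $\Area(F_r)\leq W(r)$ and $\int_\Sigma|H_r|^2\dd\mu_r=4(W(r)-\Area(F_r))\leq4W(r)$, both immediate from \eqref{eq:willmore}, together with $W(r)\leq W_0$ from monotonicity \eqref{eq:dissipation}. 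This bounds the second factor by
\[
 c_0^{-1}\sqrt{W_0}\bigl(\lVert\nabla^{\Sph^3}\varphi\rVert_{L^\infty}+2\lVert\varphi\rVert_{L^\infty}\bigr)
 \leq\frac{2\sqrt{W_0}}{c_0}\lVert\varphi\rVert_{C^1(\Sph^3)} .
\]
Only the normal component of $\nabla^{\Sph^3}\varphi$ actually pairs with $G_r$, but the crude bound by $|\nabla^{\Sph^3}\varphi|$ suffices.

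Finally, integrate in time and apply Cauchy--Schwarz on $[s,t]$ exactly as before:
\[
 \int_s^t\sqrt{-W'(r)}\dd r\leq\sqrt{t-s}\,\sqrt{W(s)-W(t)} .
\]
This gives \eqref{eq:image-bl}.

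There is no real obstacle. The only point needing care is the constant. One must combine the two terms by Minkowski rather than bounding $(|\nabla\varphi|+|\varphi||H|)^2$ crudely, so that the factor $2$ in front of $\lVert\varphi\rVert_{L^\infty}$, coming from $\lVert H_r\rVert_{L^2}\leq2\sqrt{W}$, is absorbed by the sum defining $\lVert\varphi\rVert_{C^1(\Sph^3)}$ without enlarging the constant $2\sqrt{W_0}/c_0$.
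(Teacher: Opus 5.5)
Your proposal is correct and follows essentially the same route as the paper. The paper differentiates $\int\varphi\dd\lambda_t$ via Lemma~\ref{lem:transport}, applies Cauchy--Schwarz with $\Area(F_t)\leq W_0$ and $\lVert H_t\rVert_{L^2(\dd\mu_t)}\leq2\sqrt{W_0}$, bounds $\lVert\partial_tF_t\rVert_{L^2(\dd\mu_t)}\leq c_0^{-1}\sqrt{-W'(t)}$ through Lemma~\ref{lem:velocity} (which is your weighted splitting $a_r^{-1}\lvert G_r\rvert\cdot a_r^{-1}(\cdots)$ in different bookkeeping), and finishes with the same Cauchy--Schwarz in time, giving the same constant.
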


\begin{proof}
Lemma~\ref{lem:transport} and Cauchy--Schwarz give
\[
 \begin{aligned}
 \left|\frac{\dd}{\dd t}
       \int_{\Sph^3}\varphi\dd\lambda_t\right|
 &\leq
 \Bigl(
   \lVert\nabla^{\Sph^3}\varphi\rVert_{L^\infty}
    \Area(F_t)^{1/2}
   +\lVert\varphi\rVert_{L^\infty}
    \lVert H_t\rVert_{L^2(\dd\mu_t)}
 \Bigr)
 \lVert X_t\rVert_{L^2(\dd\mu_t)}.
 \end{aligned}
\]
Since
\[
 \Area(F_t)\leq W(t)\leq W_0,
 \qquad
 \lVert H_t\rVert_{L^2(\dd\mu_t)}
 =2\sqrt{W(t)-\Area(F_t)}\leq2\sqrt{W_0},
\]
Lemma~\ref{lem:velocity} gives
\[
 \left|\frac{\dd}{\dd t}
       \int_{\Sph^3}\varphi\dd\lambda_t\right|
 \leq
 \frac{2\sqrt{W_0}}{c_0}
 \lVert\varphi\rVert_{C^1(\Sph^3)}\sqrt{-W'(t)}.
\]
Integration over $[s,t]$ and Cauchy--Schwarz in time prove
\eqref{eq:image-bl}.
\end{proof}

\begin{remark}[Role of the two estimates and of the ambient sphere]
The area-evolution argument leading to \eqref{eq:fixed-tv} and the
push-forward calculation \eqref{eq:pushforward-evolution} play
different roles.  The latter is the decisive estimate for the
geometric images: convergence of the intrinsic measures $\mu_t$
on $\Sigma$ does not by itself imply convergence of
$(F_t)_\#\mu_t$, because the maps $F_t$ vary with time.  The transport
term involving $\nabla^{\Sph^3}\varphi(F_t)$ is therefore essential.

The entire calculation is intrinsic to $\Sph^3$.  In particular,
\[
 \Area(F_t)\leq W(t)\leq W_0,
\]
and, when $W_0\leq8\pi$, the area is uniformly bounded by $8\pi$.
No stereographic projection or choice of a projection point is used.
The inclusion $\Sph^3\subset\R^4$ enters only later, when the image
measures are regarded as weights of varifolds and Allard's theorem and
Theorem~1.1 of \cite{Jakob} are applied.
\end{remark}

\begin{remark}[The role of finite time]
Estimate \eqref{eq:fixed-tv} is valid for arbitrary pairs of times.
When $T<\infty$, it implies that $(\mu_t)_{t<T}$ is Cauchy in total
variation as $t\nearrow T$.  The proof of Theorem~\ref{thm:main} uses
the image-measure estimate \eqref{eq:image-bl} together with this
finiteness of $T$; the fixed-domain estimate is not a substitute for
the transport estimate.  When $T=\infty$, the factor $t-s$ in these
estimates is unbounded and the estimates alone do not imply a Cauchy
property.  Proposition~\ref{prop:infinite-finite-length} below records
the additional finite-dissipation-length condition under which the
argument does extend to infinite time.
\end{remark}

\section{Proof of the unique-limit theorem}

\begin{lemma}[A rectifiable varifold is determined by its weight]
\label{lem:weight-determines-varifold}
Let $V_1$ and $V_2$ be rectifiable two-varifolds in $\R^4$.  If
$\lVert V_1\rVert=\lVert V_2\rVert$, then $V_1=V_2$.
\end{lemma}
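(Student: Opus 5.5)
The plan is to use the standard structure theorem for rectifiable varifolds. A rectifiable two-varifold $V$ in $\R^4$ has the form $V=v(M,\theta)$ for a countably two-rectifiable set $M$ and a locally $\mathcal H^2$-integrable positive multiplicity $\theta$. Concretely,
\[
 V(\Phi)=\int_M\Phi\bigl(x,T_xM\bigr)\,\theta(x)\dd\mathcal H^2(x),
 \qquad \Phi\in C_c\bigl(G_2(\R^4)\bigr),
\]
where $T_xM$ is the approximate tangent plane, which exists for $\mathcal H^2$-a.e.\ $x\in M$. Hence $\lVert V\rVert=\theta\,\mathcal H^2\llcorner M$. The argument therefore reduces to showing that both $\theta$ and the plane field $x\mapsto T_xM$ are recovered, up to $\lVert V\rVert$-null sets, from the weight $\lVert V\rVert$ alone.

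First I recover the multiplicity and carrier. For a rectifiable weight $\lambda=\lVert V\rVert$, the density
\[
 \Theta^2(\lambda,x)=\lim_{r\searrow0}\frac{\lambda(B_r(x))}{\pi r^2}
\]
exists and equals $\theta(x)$ for $\lambda$-a.e.\ $x$. This is the standard density theorem for rectifiable measures. Thus, up to a $\lambda$-null set, both $\theta$ and the set $M=\{\Theta^2(\lambda,\cdot)>0\}$ are functionals of $\lambda$ only. If $V_i=v(M_i,\theta_i)$ and $\lVert V_1\rVert=\lVert V_2\rVert$, then $\theta_1\mathcal H^2\llcorner M_1=\theta_2\mathcal H^2\llcorner M_2$. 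Consequently $M_1$ and $M_2$ agree up to $\mathcal H^2$-null sets, and $\theta_1=\theta_2$ on them.

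Second I recover the tangent planes. At $\lambda$-a.e.\ $x$, the blow-ups $r^{-2}(\eta_{x,r})_\#\lambda$, with $\eta_{x,r}(y)=(y-x)/r$, converge as $r\searrow0$ to $\theta(x)\mathcal H^2\llcorner P$ for a unique plane $P$. This plane is the approximate tangent plane $T_xM$. It is defined purely through $\lambda$, and the uniqueness of the limit determines $P$. For $\mathcal H^2$-a.e.\ point of $M_1\cap M_2$, it follows that $T_xM_1=T_xM_2$. Equivalently, one can invoke the locality of approximate tangent planes: if two rectifiable sets coincide up to a null set, their approximate tangent planes agree a.e.

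Finally I substitute into the defining formula. With $M_1=M_2$ and $\theta_1=\theta_2$ a.e., and with the tangent planes equal a.e., the integrands defining $V_1(\Phi)$ and $V_2(\Phi)$ agree $\mathcal H^2$-a.e., so $V_1=V_2$. There is no real obstacle here. The only delicate point is the a.e.\ existence and uniqueness of the blow-up, which I take from the rectifiability theory of Allard and Simon rather than reprove.
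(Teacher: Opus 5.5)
Your argument is correct and is essentially the paper's proof. Both write the common weight as $\theta\,\mathcal H^2\llcorner M$, note that the approximate tangent plane $T_xM$ is determined by the weight alone for $\mathcal H^2$-a.e.\ $x\in M$, and conclude $V_i(\Phi)=\int_M\Phi(x,T_xM)\,\theta\dd\mathcal H^2$ for $i=1,2$. Your blow-up characterization just spells out what the paper asserts in one line, and your density step is harmless but not needed, since equality of the weights already gives $\theta_1\mathbf 1_{M_1}=\theta_2\mathbf 1_{M_2}$ $\mathcal H^2$-a.e.
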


\begin{proof}
Write the common weight as
\[
 \lambda=\theta\,\mathcal H^2\llcorner M
\]
with $M$ countably two-rectifiable and $\theta>0$ locally
$\mathcal H^2$-integrable.  The approximate tangent plane
$T_x\lambda=T_xM$ is uniquely determined for $\lambda$-almost every
$x$.  By rectifiability, the plane component of each $V_i$ is the
Dirac mass at $T_x\lambda$ for $\lambda$-almost every $x$.  Hence
\[
 V_i(\Phi)=\int_M\Phi(x,T_xM)\,\theta(x)\dd\mathcal H^2(x)
 \qquad(i=1,2)
\]
for every $\Phi\in C_c(G_2(\R^4))$.  Thus $V_1=V_2$; compare the
rectifiable-varifold representation in \cite{Allard}.
\end{proof}

\begin{proof}[Proof of Theorem~\ref{thm:main}]
Proposition~\ref{prop:fixed-tv} and the finiteness of $T$ show that
$(\mu_t)_{t<T}$ is Cauchy in total variation.  The space of signed
Radon measures on $\Sigma$ is complete in the total-variation norm,
and its nonnegative cone is closed.  Hence there is a Radon
measure $\mu_T$ such that $\mu_t\to\mu_T$ in total variation.
Letting $t\nearrow T$ in \eqref{eq:fixed-tv} gives
\eqref{eq:main-endpoint-tv}.

Proposition~\ref{prop:image-bl} implies
\begin{equation}
 \label{eq:bl-cauchy}
 d_{\BL}(\lambda_s,\lambda_t)
 \leq\frac{2\sqrt{W_0}}{c_0}
 \sqrt{(t-s)\bigl(W(s)-W(t)\bigr)}.
\end{equation}
Because $T<\infty$, the right-hand side tends to zero as
$s,t\nearrow T$.  Moreover,
\[
 \lambda_t(\Sph^3)=\Area(F_t)\leq W_0.
\]
Since $\Sph^3$ is compact, the Radon measures $\lambda$ with
$\lambda(\Sph^3)\leq W_0$ form a sequentially compact set for weak-*
convergence in $C(\Sph^3)^*$.  Hence every sequence $t_j\nearrow T$
has a subsequence along which $\lambda_{t_j}$ converges weak-* in
$C(\Sph^3)^*$.  On this uniformly bounded family the bounded-Lipschitz
distance metrizes this weak-* convergence, and estimate
\eqref{eq:bl-cauchy} forces
any two such cluster measures to coincide.  It follows that there is a
unique Radon measure $\lambda_T$ and that the full family
satisfies
\begin{equation}
 \label{eq:weight-limit}
 \lambda_t\rightharpoonup\lambda_T
 \qquad\text{as }t\nearrow T.
\end{equation}
In fact the convergence holds in $d_{\BL}$, and letting
$t\nearrow T$ in \eqref{eq:bl-cauchy} proves
\eqref{eq:main-endpoint-bl}.

We next use varifold compactness.  For every $t$, $\Var_t$ is an
integral varifold supported in $\Sph^3$ and
\[
 \lVert\Var_t\rVert(\R^4)=\Area(F_t)\leq W_0.
\]
Since $\Var_t$ is induced by a smooth closed immersion, its first
variation is represented by the Euclidean mean-curvature vector and
satisfies
\begin{equation}
 \label{eq:first-variation}
 \begin{aligned}
 \lVert\delta\Var_t\rVert(\R^4)
 &\leq\int_\Sigma\lvert H_t^{\R^4}\rvert\dd\mu_t\\
 &\leq
 \lVert H_t^{\R^4}\rVert_{L^2(\dd\mu_t)}
 \Area(F_t)^{1/2}
 \leq2W_0.
 \end{aligned}
\end{equation}
Allard's integral compactness theorem \cite{Allard} therefore shows
that every sequence $t_j\nearrow T$ admits a subsequence $t_{j_k}$ and
an integral two-varifold $\Var$ such that
\begin{equation}
 \label{eq:cluster-varifold}
 \Var_{t_{j_k}}\rightharpoonup\Var.
\end{equation}
Taking weight measures in \eqref{eq:cluster-varifold} and using
\eqref{eq:weight-limit} gives
\begin{equation}
 \label{eq:cluster-weight}
 \lVert\Var\rVert=\lambda_T.
\end{equation}

If $\lambda_T=0$, set $\Var_T:=0$; then
\eqref{eq:cluster-weight} makes every cluster varifold the zero
varifold.  If $\lambda_T\neq0$, all cluster varifolds are integral,
and hence rectifiable, by Allard compactness and have the same weight
$\lambda_T$ by \eqref{eq:cluster-weight}.  Lemma
\ref{lem:weight-determines-varifold} therefore shows that they
coincide; denote their common value by $\Var_T$.

If the full family failed to converge to $\Var_T$, one could choose a
sequence $t_j\nearrow T$ outside a fixed weak-varifold neighborhood of
$\Var_T$.  Varifold compactness would then produce a subsequence
converging to a different cluster varifold, a contradiction.  This
proves \eqref{eq:main-convergence}.

It remains to prove the endpoint curvature assertion.  For every
$X\in C_c^1(\R^4;\R^4)$, the definition
\eqref{eq:first-variation-definition} and varifold convergence give
\[
 \delta\Var_t(X)\longrightarrow\delta\Var_T(X),
\]
because $(x,S)\mapsto\operatorname{div}_S X(x)$ is continuous and
compactly supported on $G_2(\R^4)$.  Since
\[
 \delta\Var_t(X)
 =-\int_\Sigma
 \langle H_t^{\R^4},X(F_t)\rangle\dd\mu_t,
\]
Cauchy--Schwarz, \eqref{eq:euclidean-willmore},
\eqref{eq:weight-limit}, and $W(t)\to W_T$ imply
\begin{align*}
 |\delta\Var_T(X)|
 &\leq
 \limsup_{t\nearrow T}
 \left(\int_\Sigma|H_t^{\R^4}|^2\dd\mu_t\right)^{1/2}
 \left(\int_{\Sph^3}|X|^2\dd\lambda_t\right)^{1/2}\\
 &=2\sqrt{W_T}
 \left(\int_{\Sph^3}|X|^2\dd\lambda_T\right)^{1/2}.
\end{align*}
Since $C_c^1(\R^4;\R^4)$ is dense in
$L^2(\lambda_T;\R^4)$, the preceding estimate shows that
$\delta\Var_T$ extends uniquely to a bounded linear functional on this
space.  The Riesz representation theorem for this Hilbert space
supplies a vector field
$H_{\Var_T}^{\R^4}\in L^2(\lambda_T;\R^4)$ satisfying
\[
 \delta\Var_T(X)
 =-\int_{\R^4}\langle H_{\Var_T}^{\R^4},X\rangle\dd\lambda_T,
 \qquad
 \lVert H_{\Var_T}^{\R^4}\rVert_{L^2(\lambda_T)}
 \leq2\sqrt{W_T}.
\]
By \eqref{eq:generalized-mean-curvature-definition}, this vector field
is the generalized Euclidean mean curvature of $\Var_T$, the singular
part of $\delta\Var_T$ vanishes, and the norm bound is precisely
\eqref{eq:main-curvature-lsc}.  This is the standard
lower-semicontinuity mechanism for weak mean curvature under varifold
convergence; compare \cite[(1.1) and the introduction]
{SchaetzleLowerSemicontinuity}.
\end{proof}

\subsection{A finite-dissipation-length criterion at infinite time}

The finite-time hypothesis in Theorem~\ref{thm:main} can be replaced
by an explicit integrability condition on the dissipated energy.

\begin{proposition}[Conditional unique limit at infinite time]
\label{prop:infinite-finite-length}
Let $F\colon[0,\infty)\times\Sigma\to\Sph^3$ be a smooth MIWF of a
compact torus.  Assume that there is one constant $c_0>0$ such that
\[
 \inf_{t>0}\min_{x\in\Sigma}|A_{F_t}^0(x)|^2\geq c_0,
\]
and, in addition, that
\begin{equation}
 \label{eq:finite-dissipation-length}
 \int_0^\infty\sqrt{-W'(r)}\dd r<\infty.
\end{equation}
Then there are positive Radon measures $\mu_\infty$ on $\Sigma$ and
$\lambda_\infty$ on $\Sph^3$ and a unique integral two-varifold
$\Var_\infty$ in $\R^4$, possibly the zero varifold, such that
\[
 \mu_t\longrightarrow\mu_\infty\quad\text{in total variation},
 \qquad
 \lambda_t\longrightarrow\lambda_\infty
 \quad\text{in }d_{\BL},
 \qquad
 \Var_t\rightharpoonup\Var_\infty
\]
as $t\to\infty$, and $\lambda_\infty=\lVert\Var_\infty\rVert$.
Writing $W_\infty:=\lim_{t\to\infty}W(t)$, one also has
\[
 H_{\Var_\infty}^{\R^4}
 \in L^2(\lVert\Var_\infty\rVert;\R^4),
 \qquad
 \frac14\int_{\R^4}|H_{\Var_\infty}^{\R^4}|^2
 \dd\lVert\Var_\infty\rVert\leq W_\infty.
\]
\end{proposition}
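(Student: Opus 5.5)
The plan is to repeat the proof of Theorem~\ref{thm:main}, but to replace the time Cauchy--Schwarz step by the integrability hypothesis \eqref{eq:finite-dissipation-length}. The point is that the proofs of Propositions~\ref{prop:fixed-tv} and \ref{prop:image-bl} already establish the sharper pointwise-in-time bounds, before any Cauchy--Schwarz in time is applied. Namely, for $0<s<t$,
\[
 \lVert\mu_t-\mu_s\rVert_{\TV}
 \leq\frac{2\sqrt{W_0}}{c_0}\int_s^t\sqrt{-W'(r)}\dd r,
 \qquad
 d_{\BL}(\lambda_s,\lambda_t)
 \leq\frac{2\sqrt{W_0}}{c_0}\int_s^t\sqrt{-W'(r)}\dd r .
\]
These bounds use only \eqref{eq:un} on $(s,t)$, Lemma~\ref{lem:velocity}, Lemma~\ref{lem:transport}, and the bounds $\Area(F_r)\leq W_0$ and $\lVert H_r\rVert_{L^2}\leq 2\sqrt{W_0}$. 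I would first record these two inequalities explicitly, noting that each step of the earlier proofs is valid on any finite subinterval of $[0,\infty)$.

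By \eqref{eq:finite-dissipation-length}, the tail $\int_s^\infty\sqrt{-W'(r)}\dd r$ tends to $0$ as $s\to\infty$. Hence $(\mu_t)$ is Cauchy in total variation as $t\to\infty$, and completeness together with closedness of the nonnegative cone yields $\mu_\infty$. For the image measures, the masses satisfy $\lambda_t(\Sph^3)\leq W_0$, so the family is weak-$*$ precompact. Since $d_{\BL}$ metrizes weak-$*$ convergence on this bounded family, the Cauchy property gives a unique $\lambda_\infty$ with $\lambda_t\to\lambda_\infty$ in $d_{\BL}$. Since $W$ is nonincreasing and nonnegative, the limit $W_\infty$ exists.

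The varifold part then follows verbatim from the proof of Theorem~\ref{thm:main}. The bounds $\lVert\Var_t\rVert(\R^4)\leq W_0$ and \eqref{eq:first-variation} hold for all $t$. Allard's integral compactness theorem gives integral cluster varifolds along any $t_j\to\infty$, and every cluster varifold has weight $\lambda_\infty$. Lemma~\ref{lem:weight-determines-varifold} (or the zero case) then identifies all cluster varifolds. A contradiction argument as before gives convergence of the full family, and $\lambda_\infty=\lVert\Var_\infty\rVert$ follows by taking weights. The curvature statement is obtained by the same lower-semicontinuity argument: pass to the limit in $\delta\Var_t(X)$, bound it by $2\sqrt{W_\infty}\,\lVert X\rVert_{L^2(\lambda_\infty)}$, and apply Riesz representation in $L^2(\lambda_\infty;\R^4)$.

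I do not expect a genuine obstacle. The only point needing care is that the earlier propositions are stated with the constant $\sqrt{t-s}$ already inserted, so the unintegrated form must be extracted from their proofs rather than from their statements. I would make this explicit, and also check that nothing in those proofs used $T<\infty$; only \eqref{eq:un} and energy monotonicity enter.
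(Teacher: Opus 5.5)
Your proposal is correct and follows the paper's proof essentially step for step. You extract the unintegrated bounds $\frac{2\sqrt{W_0}}{c_0}\int_s^t\sqrt{-W'(r)}\dd r$ from the proofs of Propositions~\ref{prop:fixed-tv} and~\ref{prop:image-bl}, use \eqref{eq:finite-dissipation-length} to obtain the Cauchy properties, and then repeat the Allard-compactness, weight-identification, and $L^2(\lambda_\infty)$ Riesz arguments from the proof of Theorem~\ref{thm:main}, exactly as the paper does.
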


\begin{proof}
Before the final Cauchy--Schwarz inequality in time, the proofs of
Propositions~\ref{prop:fixed-tv} and~\ref{prop:image-bl} give, for
$0<s<t$,
\begin{align*}
 \lVert\mu_t-\mu_s\rVert_{\TV}
 &\leq\frac{2\sqrt{W_0}}{c_0}
 \int_s^t\sqrt{-W'(r)}\dd r,\\
 d_{\BL}(\lambda_t,\lambda_s)
 &\leq\frac{2\sqrt{W_0}}{c_0}
 \int_s^t\sqrt{-W'(r)}\dd r.
\end{align*}
Assumption \eqref{eq:finite-dissipation-length} makes both right-hand
sides tend to zero as $s,t\to\infty$.  Completeness of the signed
Radon measures on $\Sigma$ in total variation gives $\mu_\infty$.
Compactness of $\Sph^3$, the uniform mass bound
$\lambda_t(\Sph^3)\leq W_0$, and the second Cauchy estimate give the
unique limit $\lambda_\infty$ in $d_{\BL}$.

The mass bound $\lVert\Var_t\rVert(\R^4)\leq W_0$ and the
first-variation bound \eqref{eq:first-variation} are uniform for all
$t>0$.  Allard
compactness therefore makes every sequence $t_j\to\infty$ admit an
integral-varifold cluster limit.  Each such limit has weight
$\lambda_\infty$, so Lemma~\ref{lem:weight-determines-varifold} shows
that all cluster varifolds coincide.  The usual contradiction argument
then gives convergence of the full family to a unique integral
varifold $\Var_\infty$ with weight $\lambda_\infty$.

Finally, $W(t)$ decreases to $W_\infty$.  Applying
\eqref{eq:first-variation-definition} along an arbitrary sequence
$t_j\to\infty$ and repeating the $L^2(\lambda_\infty)$ estimate in the
last part of the proof of Theorem~\ref{thm:main} gives
\[
 |\delta\Var_\infty(X)|
 \leq2\sqrt{W_\infty}
 \lVert X\rVert_{L^2(\lambda_\infty)}.
\]
The Hilbert-space Riesz theorem and
\eqref{eq:generalized-mean-curvature-definition} prove the asserted
mean-curvature statement and its bound.
\end{proof}

\begin{remark}[Relation to a \L ojasiewicz--Simon argument]
Condition \eqref{eq:finite-dissipation-length} is not a consequence of
the hypotheses used elsewhere in this paper.  A
\L ojasiewicz--Simon gradient inequality could imply such a tail
condition only after one proves that, in a fixed gauge, the trajectory
eventually enters and remains in a neighborhood of one smooth critical
immersion.  Neither this trapping statement nor the required analytic
gauge construction follows from the subsequential compactness theorem
of \cite{Jakob}.  Accordingly, Proposition
\ref{prop:infinite-finite-length} is conditional; an unconditional
infinite-time uniqueness theorem is not claimed here.
\end{remark}

\section{Unique endpoint alternatives at finite maximal time}

The results in this section concern the maximal time of existence,
because this is the endpoint to which the compactness theorems of
\cite{Jakob} apply.  We combine Theorem~\ref{thm:main} directly with those results
in $\Sph^3$; no stereographic projection is introduced.

\begin{lemma}[Umbilic event versus uniform nonumbilicity]
\label{lem:umbilic-alternative}
Let $F\colon[0,T)\times\Sigma\to\Sph^3$ be a smooth MIWF with
$T<\infty$.  Then exactly one of the following alternatives holds:
\begin{enumerate}[label=\textup{(\roman*)}]
 \item there are $t_j\nearrow T$ and $x_j\in\Sigma$ such that
 \begin{equation}
  \label{eq:umbilic-event}
  |A^0_{F_{t_j}}(x_j)|^2\longrightarrow0;
 \end{equation}
 \item there is $c_0>0$ such that
 \begin{equation}
  \label{eq:uniform-nonumbilic-alternative}
  \inf_{0<t<T}\min_{x\in\Sigma}|A^0_{F_t}(x)|^2\geq c_0.
 \end{equation}
\end{enumerate}
\end{lemma}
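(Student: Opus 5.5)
The plan is to reduce the dichotomy to a statement about the single function
\[
 m(t):=\min_{x\in\Sigma}|A^0_{F_t}(x)|^2,\qquad 0\le t<T,
\]
and then argue by negation. Since each $F_t$ is a smooth immersion of the compact torus and the MIWF is only defined where $|A^0_{F_t}|>0$, every $F_t$ is umbilic-free. Compactness of $\Sigma$ and continuity of $x\mapsto|A^0_{F_t}(x)|^2$ show that the minimum is attained, so $m(t)>0$ for every $t\in[0,T)$. Smoothness of $F$ on $[0,T)\times\Sigma$ makes $(t,x)\mapsto|A^0_{F_t}(x)|^2$ jointly continuous. Hence $m$ is continuous on $[0,T)$; the minimum over a compact set of a jointly continuous function is continuous in the parameter.

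Exclusivity comes first. Suppose (ii) holds with some $c_0>0$ and there were $t_j\nearrow T$ and $x_j$ as in (i). Then $|A^0_{F_{t_j}}(x_j)|^2\ge m(t_j)\ge c_0$ for all $j$, which contradicts convergence to $0$. So at most one alternative holds.

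Exhaustiveness is the remaining step. Suppose (ii) fails, that is, $\inf_{0<t<T}m(t)=0$. On each compact interval $[0,T-\delta]$ the continuous positive function $m$ has a positive minimum. Therefore the infimum $0$ can only be approached as $t\nearrow T$: there are $t_j\to T$ with $m(t_j)\to0$, and necessarily $t_j\nearrow T$ after passing to a subsequence, since any subsequence bounded away from $T$ would keep $m(t_j)$ above a positive constant. Choosing $x_j$ with $|A^0_{F_{t_j}}(x_j)|^2=m(t_j)$ then gives \eqref{eq:umbilic-event}. Conversely, if (i) fails, the same reasoning yields $\liminf_{t\nearrow T}m(t)>0$. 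Combined with positivity of $m$ on compact subintervals, this gives a uniform $c_0>0$, so (ii) holds.

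The only delicate point is the continuity and positivity of $m$ up to, but not including, $T$. In particular I rely on the infimum in (ii) being over $0<t<T$ while joint continuity at $t=0$ holds because $F$ is smooth on $[0,T)$. This is where the smoothness and umbilic-freeness of the solution (built into the definition of the MIWF) are used; otherwise the argument is elementary.
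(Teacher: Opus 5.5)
Your proposal is correct and follows essentially the same route as the paper: the paper also introduces $m(t)=\min_x|A^0_{F_t}(x)|^2$ and uses its positivity and continuity on each compact interval $[0,T-\varepsilon]$ to force any sequence with $m(\tau_j)\to 0$ to approach $T$. It then passes to an increasing subsequence, picks minimizing points $x_j$, and treats exclusivity as immediate; your extra ``conversely, if (i) fails'' step is just the contrapositive of the exhaustiveness argument and can be dropped.
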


\begin{proof}
Set
\[
 m(t):=\min_{x\in\Sigma}|A^0_{F_t}(x)|^2.
\]
The function $m$ is continuous and positive on $[0,T)$, since the
flow is smooth and each $F_t$ is umbilic-free.  Consequently, for every
$\varepsilon>0$ it has a positive minimum on the compact interval
$[0,T-\varepsilon]$.  If
\eqref{eq:uniform-nonumbilic-alternative} fails, choose
$\tau_j\in(0,T)$ with $m(\tau_j)<1/j$.  The preceding compact-interval
bound forces $\tau_j\to T$.  After passing to an increasing subsequence
and choosing $x_j$ at which $m(\tau_j)$ is attained, one obtains
\eqref{eq:umbilic-event}.  Conversely,
\eqref{eq:uniform-nonumbilic-alternative} plainly excludes
\eqref{eq:umbilic-event}.
\end{proof}

\begin{corollary}[Sequence-independent endpoint topology]
\label{cor:classification}
Assume the hypotheses of Theorem~\ref{thm:main} with
\begin{equation}
 \label{eq:maximal-time}
 T=T_{\max}(F_0)<\infty
 \qquad\text{and}\qquad
 W(0)\leq8\pi.
\end{equation}
Then exactly one of the following mutually exclusive alternatives
holds:
\begin{enumerate}[label=\textup{(\alph*)}]
 \item $\Var_T=0$;
 \item $\Var_T\neq0$ and $\spt\Var_T$ is a closed embedded orientable
 Lipschitz surface of genus zero;
 \item $\Var_T\neq0$ and $\spt\Var_T$ is a closed embedded orientable
 Lipschitz surface of genus one.
\end{enumerate}
In the nonzero alternatives, $\Var_T$ has unit density.
The alternative and the limiting varifold are independent of the
sequence of times tending to $T$.
\end{corollary}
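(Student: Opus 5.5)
The plan is to combine the unique limit of Theorem~\ref{thm:main} with the subsequential classification \cite[Theorem~1.1]{Jakob}. The key point is that Jakob's theorem fixes the structure of a limit that Theorem~\ref{thm:main} has already shown to be unique.

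First, check that the hypotheses of \cite[Theorem~1.1]{Jakob} hold. The trajectory is a toroidal MIWF in $\Sph^3$ with $T=T_{\max}(F_0)<\infty$ and initial energy at most $8\pi$, so it is a singular flow line in the sense of \cite[Definition~2.1]{Jakob}. Fix any sequence $t_j\nearrow T$. Jakob's theorem gives a subsequence $t_{j_k}$ along which the multiplicity-one image measures converge weakly to the weight $\lVert V\rVert$ of an integral rectifiable varifold $V$. This $V$ is zero, or it has unit density and its support is a closed embedded orientable Lipschitz surface of genus zero or one.

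Second, identify this measure limit with $\lambda_T$, which is the step needing care. Jakob's image measures are those of the unrenormalized immersions $F_t$ in $\Sph^3$. The Li--Yau inequality \cite{LiYau} and $W(t)\le W(0)\le 8\pi$ imply that every $F_t$ with $W(t)<8\pi$ is embedded. In the borderline case $W\equiv8\pi$, the flow is stationary and $T_{\max}=\infty$, which is excluded. Therefore $\lambda_t=\mathcal H^2\llcorner F_t(\Sigma)$ by \eqref{eq:embedded-varifold}, and Jakob's measures coincide with $\lambda_{t_{j_k}}$. If Jakob's convergence is stated only up to M\"obius normalization, this step fails. I would then check that his Theorem~1.1 is formulated for the original trajectory, as the introduction asserts. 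By \eqref{eq:weight-limit}, $\lambda_{t_{j_k}}\rightharpoonup\lambda_T$, so $\lVert V\rVert=\lambda_T=\lVert\Var_T\rVert$.

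Third, transfer the structure. Both $V$ and $\Var_T$ are rectifiable (the latter is integral by Theorem~\ref{thm:main}), so Lemma~\ref{lem:weight-determines-varifold} gives $V=\Var_T$. Hence $\Var_T$ satisfies one of (a)--(c), and has unit density in cases (b) and (c). The alternatives are mutually exclusive: (a) is distinguished by $\Var_T=0$, and (b) and (c) are distinguished by the genus of the single surface $\spt\Var_T=\spt\lambda_T$, which is a topological invariant. Sequence independence follows because $\Var_T$ is defined by the full-trajectory limit \eqref{eq:main-convergence}. Every sequence therefore yields the same varifold, the same support, and hence the same alternative.
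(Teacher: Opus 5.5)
Your proposal is correct and follows essentially the same route as the paper. The paper also excludes the borderline energy $8\pi$, since a trajectory of constant energy is stationary and global, contradicting $T_{\max}<\infty$; it uses Li--Yau to get embeddedness, so that Jakob's multiplicity-one image measures are exactly $\lambda_t$, then applies \cite[Theorem~1.1(1)]{Jakob} along an arbitrary sequence and identifies the resulting limit with $\Var_T$ through $\lambda_T$. Your explicit appeal to Lemma~\ref{lem:weight-determines-varifold} for $V=\Var_T$ spells out a step the paper compresses into ``Theorem~\ref{thm:main} then gives $\widetilde{\Var}=\Var_T$.''
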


\begin{proof}
If $W(t_0)=8\pi$ for some $t_0>0$, monotonicity and $W(0)\leq8\pi$
would give $W(t)=8\pi$ on $[0,t_0]$.  The dissipation identity and the
analyticity argument in \cite[proof of Theorem~1.1]{Jakob} would then
make the trajectory stationary and global.  This is incompatible with
$T_{\max}(F_0)<\infty$.
Consequently,
\[
 W(t)<8\pi\qquad\text{for every }0<t<T,
\]
and the Li--Yau inequality \cite{LiYau}, as used in
\cite[proof of Theorem~1.1]{Jakob}, shows that every $F_t$, $t>0$, is
an embedding.
Thus \eqref{eq:embedded-varifold} holds.

Given any sequence $t_j\nearrow T$, Theorem~1.1(1) of \cite{Jakob}
provides a subsequence and an integral two-varifold
$\widetilde{\Var}$
of unit density such that the multiplicity-one image measures converge
weakly to $\lVert\widetilde{\Var}\rVert$.  Since these image measures
are precisely $\lambda_t$, one has
$\lVert\widetilde{\Var}\rVert=\lambda_T$;
Theorem~\ref{thm:main} then
gives $\widetilde{\Var}=\Var_T$.  Every nonzero limit supplied by
Theorem~1.1(1) has support a closed embedded orientable Lipschitz
surface of genus zero or one.  This proves the endpoint trichotomy and
its sequence independence.
\end{proof}

\begin{corollary}[Exhaustive finite-time breakdown alternative]
\label{cor:four-way}
Let $F\colon[0,T_{\max})\times\Sigma\to\Sph^3$ be a maximal smooth
MIWF with
\[
 T_{\max}=T_{\max}(F_0)<\infty,
 \qquad W(0)\leq8\pi.
\]
Then exactly one of the following mutually exclusive alternatives
holds:
\begin{enumerate}[label=\textup{(\Alph*)}]
 \item there are $t_j\nearrow T_{\max}$ and $x_j\in\Sigma$ such that
 \[
  |A^0_{F_{t_j}}(x_j)|^2\longrightarrow0;
 \]
 \item uniform nonumbilicity holds and the unique limit is zero:
 $\Var_{T_{\max}}=0$;
 \item uniform nonumbilicity holds, $\Var_{T_{\max}}\neq0$, and
 $\spt\Var_{T_{\max}}$ is a closed embedded orientable Lipschitz
 surface of genus zero;
 \item uniform nonumbilicity holds, $\Var_{T_{\max}}\neq0$, and
 $\spt\Var_{T_{\max}}$ is a closed embedded orientable Lipschitz
 surface of genus one.
\end{enumerate}
In alternatives \textup{(C)} and \textup{(D)}, the limit has unit
density.  No assertion made here excludes alternative \textup{(D)}.
\end{corollary}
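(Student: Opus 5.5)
The plan is to obtain the exhaustive alternative by combining Lemma~\ref{lem:umbilic-alternative} with Corollary~\ref{cor:classification}. Apply Lemma~\ref{lem:umbilic-alternative} with $T=T_{\max}<\infty$; it yields exactly one of two cases. In the first case there exist $t_j\nearrow T_{\max}$ and $x_j\in\Sigma$ with $|A^0_{F_{t_j}}(x_j)|^2\to0$, which is alternative (A). In the second case some $c_0>0$ satisfies \eqref{eq:uniform-nonumbilic-alternative}, which is exactly hypothesis \eqref{eq:un} on $[0,T_{\max})$. Together with $W(0)\leq8\pi$, the hypotheses of Theorem~\ref{thm:main} and \eqref{eq:maximal-time} then hold.

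In the second case Theorem~\ref{thm:main} gives the unique limit $\Var_{T_{\max}}$. Corollary~\ref{cor:classification} splits it into exactly one of zero, genus-zero support or genus-one support, with unit density in the nonzero cases. These three subcases are (B), (C) and (D), and the unit-density assertion is inherited directly.

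Mutual exclusivity comes from two facts. First, (A) contradicts uniform nonumbilicity, which (B)--(D) all assume; this is the ``exactly one'' clause of Lemma~\ref{lem:umbilic-alternative}. Second, (B), (C) and (D) are mutually exclusive by Corollary~\ref{cor:classification}. Within the latter, (B) is separated from the others by $\Var_{T_{\max}}=0$ versus $\neq0$. Genus zero and genus one are separated because the support is a uniquely determined set and the genus of a closed surface is a topological invariant. Well-definedness of the labels (B)--(D) in the second case relies on the uniqueness of $\Var_{T_{\max}}$ from Theorem~\ref{thm:main}.

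The only delicate point is that (B)--(D) mention ``the unique limit'' without reference to a particular $c_0$. This is harmless because the varifold limit in Theorem~\ref{thm:main} is defined intrinsically as the limit of $\Var_t$ and does not depend on $c_0$. The final sentence, that nothing excludes (D), is a remark and not a claim, so it requires no proof.
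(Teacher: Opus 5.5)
Your proposal is correct and follows the paper's own proof. Lemma~\ref{lem:umbilic-alternative} separates (A) from uniform nonumbilicity, and in the latter case Theorem~\ref{thm:main} and Corollary~\ref{cor:classification} give exactly one of (B)--(D), with unit density in (C) and (D); your remarks on mutual exclusivity and on the independence of the limit from $c_0$ just spell out what the paper leaves implicit.
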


\begin{proof}
Lemma~\ref{lem:umbilic-alternative} gives the mutually exclusive
alternative between \textup{(A)} and uniform nonumbilicity.  In the
latter case Theorem~\ref{thm:main} and
Corollary~\ref{cor:classification} give exactly \textup{(B)},
\textup{(C)}, or \textup{(D)}, including the unit-density statement.
\end{proof}

\begin{corollary}[Full Hausdorff convergence in the nonzero case]
\label{cor:hausdorff}
Under the hypotheses of Corollary~\ref{cor:classification}, if
$\Var_T\neq0$, then
\begin{equation}
 \label{eq:hausdorff}
 F_t(\Sigma)\longrightarrow\spt\Var_T
 \qquad\text{in Hausdorff distance as }t\nearrow T.
\end{equation}
\end{corollary}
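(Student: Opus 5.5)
The plan is to prove the two halves of the Hausdorff distance separately. Set $K:=\spt\Var_T=\spt\lambda_T$, which is a nonempty compact subset of $\Sph^3$ because $\Var_T\neq0$. The half $\sup_{y\in K}\operatorname{dist}(y,F_t(\Sigma))\to0$ comes from weak convergence and lower semicontinuity of mass on open sets. Fix $y\in K$ and $r>0$. Then $\lambda_T(B_r(y))>0$, and by \eqref{eq:weight-limit}
\[
\liminf_{t\nearrow T}\lambda_t(B_r(y))\geq\lambda_T(B_r(y))>0 .
\]
Hence $F_t(\Sigma)\cap B_r(y)\neq\emptyset$ for all $t$ close to $T$. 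A finite $r$-net of $K$ makes this uniform in $y$, so the first half is done.

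The other half, $\sup_{x\in F_t(\Sigma)}\operatorname{dist}(x,K)\to0$, needs a lower density bound for the approximating surfaces. Suppose it fails. Then there are $t_j\nearrow T$, points $x_j\in F_{t_j}(\Sigma)$ and $\delta>0$ with $\operatorname{dist}(x_j,K)\geq2\delta$. By compactness of $\Sph^3$ we may assume $x_j\to x_\infty$ with $\operatorname{dist}(x_\infty,K)\geq2\delta$, so $\lambda_T(B_\delta(x_\infty))=0$. I would then use Simon's monotonicity formula for closed immersed surfaces in $\R^4$ with $L^2$ mean curvature. It gives, for every $x$ on the surface and every $\rho>0$,
\[
\pi\leq C\Bigl(\rho^{-2}\mu\bigl(F^{-1}(B_\rho(x))\bigr)+\int_{F^{-1}(B_\rho(x))}|H^{\R^4}|^2\dd\mu\Bigr),
\]
with a universal constant $C$.

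The hard part is the curvature term, which does not become small on its own. Energy could concentrate at $x_j$ while the area near $x_j$ disappears. The plan is to rule this out using that the flow is embedded and that $W_t<8\pi$ (shown in the proof of Corollary~\ref{cor:classification}). Concretely, Simon's argument gives a lower bound for the $L^2$ curvature near a point where a component of the surface leaves $B_\delta(x_j)$ with little area. The bound is roughly $\int_{B_\delta(x_j)}|H^{\R^4}|^2\geq c>0$, and I would combine it with the Li--Yau/Simon estimate $\tfrac14\int|H^{\R^4}|^2\geq 4\pi\,\theta$ at multiplicity points to get a contradiction with $W<8\pi$. That by itself may not close the argument. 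The fallback is to quote the Hausdorff-convergence part of \cite[Theorem~1.1]{Jakob}. That theorem is proved by exactly this monotonicity-plus-energy-quantization argument and gives, for a subsequence of any $t_j$, $F_{t_j}(\Sigma)\to\spt\widetilde\Var$ in Hausdorff distance in the nonzero case. By the proof of Corollary~\ref{cor:classification}, $\widetilde\Var=\Var_T$, so the subsequential Hausdorff limit is always $K$.

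Once subsequential Hausdorff convergence to the single set $K$ holds along every sequence, the usual subsequence argument finishes the proof. If \eqref{eq:hausdorff} failed, there would be a sequence with $d_{\mathcal H}(F_{t_j}(\Sigma),K)\geq\varepsilon$. Every subsequence of it has a further subsequence converging in Hausdorff distance to $K$, which is a contradiction. The main obstacle is therefore the second half, excluding thin pieces of $F_t(\Sigma)$ far from $K$. My first attempt would be to extract this from the cited Hausdorff statement of \cite{Jakob}, with the monotonicity route as backup.
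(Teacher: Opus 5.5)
Your fallback is exactly the paper's proof: argue by contradiction along a sequence $t_j\nearrow T$, extract a subsequence on which Jakob's Theorem~1.1(1) gives Hausdorff convergence to the support of a nonzero limit varifold, and identify that varifold with $\Var_T$ via Theorem~\ref{thm:main} (as in the proof of Corollary~\ref{cor:classification}). Your direct lower-semicontinuity proof of the half $\sup_{y\in K}\operatorname{dist}(y,F_t(\Sigma))\to0$ is correct but redundant once that citation is used, and the monotonicity sketch is not needed; if you want the other half self-contained, handle the curvature term by summing Simon's monotonicity inequality over many disjoint small balls along the connected surface between $x_j$ and $K$, rather than by a Li--Yau multiplicity bound.
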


\begin{proof}
If \eqref{eq:hausdorff} failed, there would exist $\varepsilon>0$ and a
sequence $t_j\nearrow T$ such that
\[
 d_{\mathcal H}\bigl(F_{t_j}(\Sigma),\spt\Var_T\bigr)
 \geq\varepsilon.
\]
After passing to a subsequence, Theorem~1.1(1) of \cite{Jakob} gives
weak convergence of the image measures and, because the limit is
nonzero, Hausdorff convergence to the support of the limiting
varifold.  The latter is $\Var_T$ by Theorem~\ref{thm:main}, a contradiction.
\end{proof}

\begin{corollary}[Parametrization of the unique toroidal limit]
\label{cor:parametrization}
Assume the hypotheses of Corollary~\ref{cor:classification} and suppose that
$\Var_T\neq0$ and $\spt\Var_T$ has genus one.  Then there exists a
uniformly conformal bi-Lipschitz homeomorphism
\[
 f\in W^{2,2}(\Sigma,\R^4)\cap W^{1,\infty}(\Sigma,\R^4),
 \qquad
 f\colon\Sigma\longrightarrow M:=\spt\Var_T,
\]
and there are a smooth zero-scalar-curvature, unit-volume metric
$g_{\mathrm{poin}}$ on $\Sigma$ and a function $u\in L^\infty(\Sigma)$
such that
\begin{equation}
 \label{eq:conformal-limit}
 f^*g_{\mathrm{euc}}=e^{2u}g_{\mathrm{poin}}.
\end{equation}
Moreover,
\begin{equation}
 \label{eq:unit-density-limit}
 \Var_T=v(M,1),
\end{equation}
and
\begin{equation}
 \label{eq:limit-measure}
 \lVert\Var_T\rVert
 =f_\#\mu_{f^*g_{\mathrm{euc}}}
 =\mathcal H^2\llcorner M.
\end{equation}
In particular, formula \eqref{eq:limit-measure} is independent of the sequence
used to construct the parametrization $f$.
\end{corollary}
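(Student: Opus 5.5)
The plan is to import the parametrization from \cite[Theorem~1.1]{Jakob} along one sequence of times, and then use Theorem~\ref{thm:main} and Corollary~\ref{cor:classification} to show that the objects obtained do not depend on that sequence.

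\emph{Step 1 (construct $f$ along one sequence).} Fix any sequence $t_j\nearrow T$. Since $\spt\Var_T$ has genus one, we are in the genus-one, or nondegenerate, branch of \cite[Theorem~1.1]{Jakob}. That theorem provides, after passing to a subsequence, a uniformly conformal bi-Lipschitz parametrization
\[
f\in W^{2,2}(\Sigma,\R^4)\cap W^{1,\infty}(\Sigma,\R^4)
\]
of the support of the subsequential limit varifold $\widetilde{\Var}$. It also provides a flat unit-volume metric $g_{\mathrm{poin}}$, obtained as the limit of the constant-curvature representatives of the conformal classes of $F_{t_j}^*g_{\mathrm{euc}}$, and a bounded conformal factor $u$, so that \eqref{eq:conformal-limit} holds. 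The proof of Corollary~\ref{cor:classification} already shows $\widetilde{\Var}=\Var_T$. Hence $f$ parametrizes $M=\spt\Var_T$, and it is a homeomorphism onto $M$ because it is bi-Lipschitz and surjective onto the support.

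\emph{Step 2 (identify the measures).} Since $f$ is injective and Lipschitz, the area formula gives
\[
f_\#\mu_{f^*g_{\mathrm{euc}}}=\mathcal H^2\llcorner f(\Sigma)=\mathcal H^2\llcorner M.
\]
The Jacobian is $e^{2u}$ with respect to $\mu_{g_{\mathrm{poin}}}$, and it is bounded above and below. On the other hand, Corollary~\ref{cor:classification} says $\Var_T$ has unit density. Its weight is therefore $\theta\,\mathcal H^2\llcorner M'$ with $\theta=1$ on a rectifiable set $M'$ whose closure is $M$. Since $M$ is an embedded Lipschitz surface, and hence $\mathcal H^2$-rectifiable with positive density everywhere, the points of $M\setminus M'$ carry no $\mathcal H^2$ measure: at $\mathcal H^2$-almost every point of $M$ the weight has density equal to the density of $\mathcal H^2\llcorner M$, which is one. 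This yields
\[
\lVert\Var_T\rVert=\mathcal H^2\llcorner M.
\]
Lemma~\ref{lem:weight-determines-varifold} then gives $\Var_T=v(M,1)$, which is \eqref{eq:unit-density-limit}, and combining with the area-formula identity gives \eqref{eq:limit-measure}.

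\emph{Step 3 (sequence independence).} The right-hand side $\mathcal H^2\llcorner M$ depends only on $\Var_T$, which Theorem~\ref{thm:main} shows is unique. So whichever sequence and subsequence were used to build $f$, the measure $f_\#\mu_{f^*g_{\mathrm{euc}}}$ is the same. The map $f$ itself is not claimed to be unique.

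\emph{Main obstacle.} I expect the hard point to be the identification $\lVert\Var_T\rVert=\mathcal H^2\llcorner M$. One must exclude extra $\mathcal H^2$-null weight or density defects on $M$. I would first rely on \cite[Theorem~1.1]{Jakob}, which asserts directly that the limit measure equals $f_\#\mu_{f^*g_{\mathrm{euc}}}$ through weak convergence of the pushforwards, and I would use the unit-density argument above only as a cross-check.
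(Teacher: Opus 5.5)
Your proposal is correct and follows the paper's route. The paper likewise imports $f$, the conformal identity \eqref{eq:conformal-limit}, and the measure identity \eqref{eq:limit-measure} from Theorem~1.1(2) and formula~(9) of \cite{Jakob} along one sequence. It then uses unit density to get $\Var_T=v(M,1)$ and Theorem~\ref{thm:main} for sequence independence. Your area-formula computation and your appeal to Lemma~\ref{lem:weight-determines-varifold} just spell out steps the paper states briefly.

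One caution concerns your unit-density ``cross-check'', which is not valid as written. Unit density together with $\spt\Var_T=M$ does not by itself exclude $\mathcal H^2(M\setminus M')>0$, because a dense open subset of $M$ of small area already has full support. Without Jakob's measure identity you would need an extra input: the monotonicity formula for integral two-varifolds with $L^2$ mean curvature, which applies here by \eqref{eq:main-curvature-lsc}. It gives density at least one at every point of $M$, hence $\lVert\Var_T\rVert\geq\mathcal H^2\llcorner M$.
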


\begin{proof}
Apply Theorem~1.1(2) and formula (9) of \cite{Jakob} to any sequence
realizing the nonzero genus-one limit.  They provide $f$, the conformal
identity \eqref{eq:conformal-limit}, and the measure identity
\eqref{eq:limit-measure}.
Since the limit has unit density, the corresponding full rectifiable
varifold is $v(M,1)$.  Equivalently, it is obtained by integrating over
$\Sigma$ with the area measure of $f^*g_{\mathrm{euc}}$ and with the
approximate tangent plane carried by $f$.  Theorem~\ref{thm:main} makes this
geometric varifold independent of the sequence.
\end{proof}

\begin{remark}[What is and is not unique]
Theorem~\ref{thm:main} and Corollary~\ref{cor:parametrization} prove uniqueness of the unparametrized
geometric limit.  They do not produce a unique limiting map
$F_T\colon\Sigma\to\Sph^3$.  The parametrizations in
\cite[Theorem~1.1(2)]{Jakob} are obtained after sequence-dependent
diffeomorphisms of the domain.  Different choices can therefore give
different parametrizations of the same uniquely determined varifold.
\end{remark}

\begin{remark}[Nonumbilicity is not stable under the available convergence]
\label{rem:nonumbilic-transfer}
The uniform lower bound along the smooth trajectory does not, by
itself, imply
\[
 \operatorname*{ess\,inf}_{x\in\Sigma}|A^0_f(x)|^2>0
\]
for the parametrization supplied by
Corollary~\ref{cor:parametrization}, where $A_f^0$ denotes the weak
trace-free second fundamental form of $f$.  The convergence in
\cite[Theorem~1.1(2)]{Jakob} is only weak in $W^{2,2}$ and weak-* in
$W^{1,\infty}$ after sequence-dependent reparametrizations.  Since
$A^0$ depends on second derivatives and on the induced metric, neither
this weak convergence nor the accompanying $C^0$ convergence transmits
a pointwise lower bound.  Such a conclusion would require an
additional no-defect argument, for example a strong $W^{2,2}$
compactness statement with sufficient control of the induced metrics.
The stronger convergence in \cite[Theorem~1.1(3)]{Jakob} requires an
energy identity which is not among the hypotheses of
Corollary~\ref{cor:parametrization}.
\end{remark}

\subsection{The unresolved toroidal continuation problem}

Alternative \textup{(D)} in Corollary~\ref{cor:four-way} cannot be
removed by the results proved in this paper.  Turning the unique
toroidal endpoint into a continuation criterion would require all of
the following additional ingredients.
\begin{enumerate}[label=\textup{(C\arabic*)}]
 \item A stability theorem transferring quantitative nonumbilicity to
 the limiting toroidal parametrization, as explained in
 Remark~\ref{rem:nonumbilic-transfer}.
 \item A time-dependent family of domain diffeomorphisms placing the
 entire trajectory in a single gauge and producing a trace at
 $t=T_{\max}$ in $W^{2,2}(\Sigma,\R^4)$.  Subsequence-dependent spatial
 reparametrizations do not provide such a trace.
 \item Short-time existence and uniqueness for quantitatively
 umbilic-free initial data in $W^{2,2}$.  The maximal-regularity theory
 in \cite{JakobRegularity} works in
 \[
  W^{1,p}(0,T;L^p)\cap L^p(0,T;W^{4,p}),
  \qquad
  F_0\in W^{4-4/p,p},
 \]
 with $p>3$.  Reaching $p=2$, proving a compatible trace theorem, and
 gluing the restarted solution to the original trajectory require a
 new critical-regularity theory.
\end{enumerate}
Only after \textup{(C1)}--\textup{(C3)} have been established can a
toroidal endpoint be ruled out by contradiction with the maximality of
$T_{\max}$.  The remaining finite-time breakdown mechanisms would then
be precisely the umbilic event, the zero endpoint, and the genus-zero
endpoint.  This conditional conclusion is not asserted as a theorem
here; compare \cite[Remarks~7.1--7.3]{Jakob}.

\section{Finite-time Hopf profiles and their conformal moduli}

Let \(\pi\colon\Sph^3\to\Sph^2\) denote the standard Hopf fibration.

\begin{corollary}[Finite-time Hopf torus trajectories]
\label{cor:hopf}
Suppose that the MIWF starts from a smooth parametrization \(F_0\) of
a smooth Hopf torus in the sense of Pinkall \cite{Pinkall}, that
\(W(0)\leq8\pi\), and that
\(T=T_{\max}(F_0)<\infty\).  Then there exists a unique unrenormalized
limiting integral varifold \(\Var_T\).  Its support is an embedded
\(C^1\) Hopf torus, and the convergence is both weak varifold
convergence and Hausdorff convergence.  Moreover,
\eqref{eq:conformal-limit}--\eqref{eq:limit-measure} hold
for a uniformly conformal bi-Lipschitz parametrization of the limiting
Hopf torus.  In particular, among the alternatives of
Corollary~\ref{cor:four-way}, every such finite-time Hopf trajectory
lies in alternative \textup{(D)}.
\end{corollary}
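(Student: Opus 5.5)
The plan is to verify the hypothesis \eqref{eq:un} automatically in the Hopf sector, then apply Theorem~\ref{thm:main} and Corollaries~\ref{cor:classification}--\ref{cor:parametrization}, and finally import the Hopf-specific subsequential statement \cite[Theorem~1.2]{Jakob} to identify the support.

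\textbf{Step 1: the flow stays in the Hopf sector.} Start from the equivariance of the MIWF. The flow equation is covariant under isometries of $\Sph^3$, and in particular under the $\Sph^1$-action generating the Hopf fibres. By uniqueness of smooth solutions \cite{JakobShortTime}, an $\Sph^1$-invariant initial torus stays $\Sph^1$-invariant, after possibly composing with a time-dependent tangential reparametrization. This does not affect $A^0$, $\mu_t$, $\lambda_t$ or $\Var_t$, since all of these are geometric. So every $F_t(\Sigma)$ is the Hopf preimage $\pi^{-1}(\gamma_t)$ of a smooth closed curve $\gamma_t\subset\Sph^2$. Here I use only the equivariance and reduction formulas of \cite{JakobHopf}, which are unaffected by \cite{JakobHopfCorrection}.

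\textbf{Step 2: uniform nonumbilicity.} Pinkall's formula \cite{Pinkall} gives $|A^0|^2=2(1+\kappa^2)\geq2$ on $\pi^{-1}(\gamma)$, with the normalization of the paper. Hence \eqref{eq:un} holds with $c_0=2$ uniformly on $(0,T)$. This is the only place the Hopf structure is really used for the measure estimates. The main thing to check carefully is that this normalization of $|A^0|^2$, including the factor $2$ and the curvature convention on $\Sph^2$ of radius $1/2$ versus $1$, matches the one in \eqref{eq:miwf}. A different constant would be harmless anyway, since any positive lower bound suffices.

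\textbf{Step 3: unique limit and topology.} Theorem~\ref{thm:main} now yields a unique unrenormalized limit $\Var_T$, reached without reparametrization. Corollary~\ref{cor:classification} places $\Var_T$ in one of the alternatives (a)--(c) and shows it has unit density when nonzero. To identify which alternative occurs, take any sequence $t_j\nearrow T$ and apply \cite[Theorem~1.2]{Jakob}. Every nonzero subsequential support is an embedded $C^1$ Hopf torus, and by uniqueness this support is $\spt\Var_T$.

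\textbf{Step 4: exclude the zero limit (main obstacle).} The point I expect to be delicate is ruling out $\Var_T=0$. A Hopf torus $\pi^{-1}(\gamma)$ has area $\pi L(\gamma)$, up to the normalization constant. I would first bound $L(\gamma_t)$ below: the energy is $\int_\gamma(1+\kappa^2)\,ds$ up to constants, and a closed curve on $\Sph^2$ has $\int(1+\kappa^2)\,ds\geq 2\pi$ by Gauss--Bonnet/Fenchel-type bounds. I would then use the TV convergence $\mu_t\to\mu_T$ from Theorem~\ref{thm:main}, which gives $\lambda_T(\Sph^3)=\mu_T(\Sigma)=\lim\Area(F_t)$ because total mass is preserved by pushforward. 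So it suffices to show $\liminf\Area(F_t)>0$.

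If the lengths could collapse while $\int\kappa^2$ blows up, the energy bound $W\leq 8\pi$ would prevent this. The reason is that a short curve with $\int(1+\kappa^2)\,ds\leq C$ contradicts $\int|\kappa|\,ds\geq 2\pi-\text{(enclosed area)}$ via Cauchy--Schwarz: $(\int|\kappa|)^2\leq L\int\kappa^2$. Alternatively, I would quote the exclusion of the zero case in \cite[Theorem~1.2]{Jakob} directly, if it is stated there.

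\textbf{Step 5: Hausdorff convergence and parametrization.} Once the limit is nonzero of genus one, Corollary~\ref{cor:hausdorff} gives the Hausdorff convergence. Corollary~\ref{cor:parametrization} gives \eqref{eq:conformal-limit}--\eqref{eq:limit-measure}. The trajectory therefore lies in alternative (D) of Corollary~\ref{cor:four-way}, since (A) is excluded by Step 2 and (B), (C) are excluded by Steps 3--4.
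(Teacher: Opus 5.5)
Your proposal is correct and follows essentially the paper's route. The steps match: Hopf-sector preservation from uniqueness of the MIWF (you argue directly by $\Sph^1$-equivariance, whereas the paper cites the reduced-flow argument of \cite[proof of Theorem~1.2(1)]{Jakob}), then \eqref{eq:un} with $c_0=2$ from $|A^0|^2=2(1+\kappa^2)$, then Theorem~\ref{thm:main} combined with \cite[Theorem~1.2(1)]{Jakob}, and finally Corollaries~\ref{cor:hausdorff} and~\ref{cor:parametrization}.

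Your Step~4 makes explicit the exclusion of $\Var_T=0$, which the paper simply takes from \cite[Theorem~1.2(1)]{Jakob}. The cleanest way to do it is Fenchel's theorem: $2\pi\le\int(1+\kappa^2)^{1/2}\dd s\le\sqrt{L(\gamma_t)E(\gamma_t)}$, which with $E\le 8$ gives $\lambda_t(\Sph^3)=\pi L(\gamma_t)\ge\pi^3/2$. Testing the $d_{\BL}$-convergence with $\varphi\equiv1$ then passes this lower bound to $\lambda_T$. That is the correct justification, rather than treating $\lambda_T$ as a push-forward of $\mu_T$, which it is not.
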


\begin{proof}
The argument in \cite[proof of Theorem~1.2(1)]{Jakob} combines
Proposition~4.4 there, existence and uniqueness for the reduced
degenerate elastic flow, uniqueness of the MIWF, and the
Hopf-fibration reduction developed in \cite{JakobHopf}.  It shows that
the trajectory remains in the Hopf sector, up to smooth
time-dependent domain reparametrization.  Thus each \(F_t\)
parametrizes a Hopf torus.  For a Hopf torus with profile curve
\(\gamma_t\), Proposition~4.3 of \cite{Jakob} gives
\begin{equation}
 \label{eq:hopf-nonumbilic}
 |A_t^0|^2=2\bigl(1+\kappa_{\gamma_t}^2\bigr)\geq2 .
\end{equation}
Here \(\kappa_{\gamma_t}\) is the signed geodesic curvature of
\(\gamma_t\subset\Sph^2\).  Thus \eqref{eq:un} holds with \(c_0=2\).
Theorem~\ref{thm:main} yields uniqueness of the varifold limit, while
Theorem~1.2(1) of \cite{Jakob} shows that every subsequential limit in
the Hopf sector is supported on an embedded \(C^1\) Hopf torus.
Corollaries~\ref{cor:hausdorff} and~\ref{cor:parametrization} yield
the remaining claims.
\end{proof}

We next fix the profile gauge.  Identify \(\Sph^1\) with
\(\R/\mathbb Z\).  Choose the smooth oriented normal-gauge profile
\(\gamma_t\colon\Sph^1\to\Sph^2\) supplied by the reduced flow in
\cite[Proposition~4.4]{Jakob}, with a fixed material point
\(0\in\Sph^1\), and set
\begin{equation}
 \label{eq:elastic-energy}
 E(\gamma):=\int_{\Sph^1}(1+\kappa_\gamma^2)\dd s_\gamma,
 \qquad L(\gamma):=\int_{\Sph^1}\dd s_\gamma .
\end{equation}
Thus \(\partial_t\gamma_t\) is normal to the curve in \(\Sph^2\).
For \(x\in[0,1]\), extended equivariantly to \(\R\), define
\begin{equation}
 \label{eq:anchored-gauge}
 \phi_t(x):=\frac1{L(\gamma_t)}
       \int_0^x|\partial_z\gamma_t(z)|\dd z,\qquad
 \psi_t:=\phi_t^{-1},\qquad
 \widetilde\gamma_t:=\gamma_t\circ\psi_t .
\end{equation}
Then \(\phi_t(0)=\psi_t(0)=0\) and
\begin{equation}
 \label{eq:constant-profile-speed}
 |\partial_x\widetilde\gamma_t|=L(\gamma_t)
 \quad\text{on }\Sph^1 .
\end{equation}
Since \(\widetilde\gamma_t\) is an orientation-preserving
reparametrization of \(\gamma_t\), one also has
\[
 L(\widetilde\gamma_t)=L(\gamma_t),
 \qquad E(\widetilde\gamma_t)=E(\gamma_t).
\]
The fixed point \(0\) removes the phase freedom in the
constant-speed parametrization.

The following closed-curve estimate is the geometric input behind
the anchored profile argument.  We include its proof in order to make
clear that neither length preservation nor the particular elastic
flow studied in \cite{RuppSpener} is being used.

\begin{lemma}[Anchored constant-speed estimate for normal families]
\label{lem:anchored-gauge}
Let \(I\subset\R\) be an interval and let
\(f\colon I\times\Sph^1\to\R^d\) be a smooth family of immersed
closed curves whose velocity is everywhere normal, namely
\begin{equation}
 \label{eq:normal-curve-family}
 \langle\partial_tf,\partial_xf\rangle=0.
\end{equation}
Let \(\widetilde f\) be the anchored constant-speed parametrization
defined as in \eqref{eq:anchored-gauge}.  Write
\[
 L(f)=\int_{\Sph^1}\dd s_f,\qquad
 E_{\R^d}(f)=\frac12\int_{\Sph^1}|\vec\kappa_f|^2\dd s_f .
\]
Then, at every time,
\begin{equation}
 \label{eq:closed-gauge-estimate}
 \|\partial_t\widetilde f\|_{L^2(\dd x)}
 \leq
 \left(\frac2{L(f)}+16E_{\R^d}(f)\right)^{1/2}
 \|\partial_tf\|_{L^2(\dd s_f)} .
\end{equation}
\end{lemma}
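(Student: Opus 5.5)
The plan is to differentiate the composition $\widetilde f_t=f_t\circ\psi_t$ directly and to split the velocity into a normal and a tangential part, which are orthogonal by \eqref{eq:normal-curve-family}. Write $v:=|\partial_xf|$, so that $\dd s_f=v\dd x$, $L=\int_{\Sph^1}v\dd x$, and $\phi_t(x)=L^{-1}\int_0^xv\dd z$. The chain rule gives
\[
 \partial_t\widetilde f(t,y)=(\partial_tf)(t,\psi_t(y))+(\partial_xf)(t,\psi_t(y))\,\partial_t\psi_t(y).
\]
Hence, pointwise,
\[
 |\partial_t\widetilde f|^2=|\partial_tf|^2\circ\psi_t+v^2(\partial_t\psi_t)^2\circ\psi_t .
\]
Differentiating the identity $\phi_t(\psi_t(y))=y$ in $t$ gives $\partial_t\psi_t=-(\partial_t\phi_t/\partial_x\phi_t)\circ\psi_t$. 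Since $\partial_x\phi_t=v/L$, the tangential term becomes $v\,\partial_t\psi_t=-L\,(\partial_t\phi_t)\circ\psi_t$.

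Next I integrate over $y\in\Sph^1$ and change variables $y=\phi_t(x)$, using $\dd y=(v/L)\dd x=\dd s_f/L$. The normal term contributes exactly $L^{-1}\|\partial_tf\|_{L^2(\dd s_f)}^2$. The tangential term contributes $L^2\int_{\Sph^1}(\partial_t\phi_t)^2\dd y\leq L^2\sup_x|\partial_t\phi_t(x)|^2$, because $\dd y$ is a probability measure on $\Sph^1=\R/\mathbb Z$.

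The main step is the pointwise bound on $\partial_t\phi_t$, which encodes the drift of the anchored arclength gauge. For a normal variation, the standard first-variation formula for the line element gives $\partial_tv=-\langle\vec\kappa_f,\partial_tf\rangle v$, and therefore $\partial_tL=-\int\langle\vec\kappa_f,\partial_tf\rangle\dd s_f$. Differentiating $\phi_t$ then yields
\[
 \partial_t\phi_t(x)=\frac1L\int_0^x\partial_tv\dd z-\frac{\partial_tL}{L}\,\phi_t(x).
\]
Since $0\le\phi_t\le1$, both terms are bounded by $L^{-1}\int_{\Sph^1}|\vec\kappa_f||\partial_tf|\dd s_f$. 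Cauchy--Schwarz then gives
\[
 \sup_x|\partial_t\phi_t|\le\frac2L\,(2E_{\R^d}(f))^{1/2}\,\|\partial_tf\|_{L^2(\dd s_f)} .
\]
The anchoring at the fixed material point $0$ is what makes the lower limit of the integral fixed, so that no phase-drift term appears.

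Combining the two contributions gives
\[
 \|\partial_t\widetilde f\|_{L^2(\dd x)}^2\le\Bigl(\frac1L+8E_{\R^d}(f)\Bigr)\|\partial_tf\|_{L^2(\dd s_f)}^2 ,
\]
which is stronger than \eqref{eq:closed-gauge-estimate}, so the stated constants leave slack. The only point I expect to need care is verifying $\partial_tv=-\langle\vec\kappa_f,\partial_tf\rangle v$ under the normality hypothesis \eqref{eq:normal-curve-family}, with the sign convention $\vec\kappa_f=\partial_s^2f$. One integration by parts in $x$ gives $\partial_tv=\langle\partial_x\partial_tf,\partial_xf\rangle/v=-\langle\partial_tf,\partial_x(\partial_xf/v)\rangle$. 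The boundary term vanishes by normality, and $\partial_x(\partial_xf/v)=v\vec\kappa_f$, so the sign does not matter for the absolute-value bounds. Smoothness of $\psi_t$ in $t$ follows from the implicit function theorem because $\partial_x\phi_t=v/L>0$.
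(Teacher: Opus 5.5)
Your proof is correct and follows the paper's argument: the same chain-rule splitting $\partial_t\widetilde f=V\circ\psi+(\partial_xf\circ\psi)\,\partial_t\psi$, the normal line-element variation $\partial_t\dd s_f=-\langle\vec\kappa_f,V\rangle\dd s_f$, the change of variables $\dd y=\dd s_f/L$, and Cauchy--Schwarz on $\int|\langle\vec\kappa_f,V\rangle|\dd s_f$. The one difference is that you use the orthogonality of the normal and tangential terms where the paper uses $|X+Y|^2\le2|X|^2+2|Y|^2$, which gives you the sharper constant $1/L+8E_{\R^d}(f)$; also, your identity for $\partial_tv$ is a pointwise product rule using $\langle\partial_tf,\partial_xf\rangle\equiv0$, not an integration by parts.
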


\begin{proof}
Fix a time and abbreviate \(V:=\partial_tf\), \(L:=L(f)\), and
\(q:=\langle V,\vec\kappa_f\rangle\).  If \(s=s(x)\) denotes
arc length from the anchored point, the normality condition
\eqref{eq:normal-curve-family} gives
\[
 \partial_t\dd s_f=-q\,\dd s_f.
\]
Consequently, differentiation of
\[
 \phi(x)=L^{-1}\int_0^x|\partial_zf|\dd z,\qquad
 \psi=\phi^{-1},
\]
and the first variation formula for the length of a closed curve give
\[
 L'=-\int_0^Lq\,\dd s
\]
and
\[
 |(\partial_xf)\,\partial_t\psi|\circ\phi
 =
 \left|
   -\frac{s}{L}\int_0^Lq\,\dd s+\int_0^sq\,\dd r
 \right| .
\]
Since \(\dd(\phi(x))=\dd s/L\), a change of variables yields
\[
 \int_{\Sph^1}|V\circ\psi|^2\dd x
 =\frac1L\|V\|_{L^2(\dd s_f)}^2.
\]
Moreover,
\begin{align*}
 \int_{\Sph^1}|(\partial_xf\circ\psi)\partial_t\psi|^2\dd x
 &\leq
 4\left(\int_{\Sph^1}|q|\dd s_f\right)^2\\
 &\leq
 4\|V\|_{L^2(\dd s_f)}^2
   \|\vec\kappa_f\|_{L^2(\dd s_f)}^2\\
 &=8E_{\R^d}(f)\|V\|_{L^2(\dd s_f)}^2 .
\end{align*}
Finally,
\(\partial_t\widetilde f=V\circ\psi+
 (\partial_xf\circ\psi)\partial_t\psi\).
The inequality \(|X+Y|^2\leq2|X|^2+2|Y|^2\) proves
\eqref{eq:closed-gauge-estimate}.  This is the closed-curve,
normal-velocity version of
\cite[Lemma~4.10 and Remark~4.11]{RuppSpener}.
\end{proof}

\begin{lemma}[Profile and surface speeds]
\label{lem:profile-surface-speed}
Between a normal-gauge Hopf profile evolution and its corresponding
MIWF trajectory, the following identity holds:
\begin{equation}
 \label{eq:profile-surface-speed}
 \|\partial_t\gamma_t\|_{L^2(\dd s_{\gamma_t})}
 =\frac2{\sqrt\pi}
 \|\partial_tF_t\|_{L^2(\Sigma,\dd\mu_t)} .
\end{equation}
\end{lemma}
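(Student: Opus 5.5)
I would compute both sides through the Hopf parametrization and reduce the identity to a pointwise statement on each fibre. Pinkall's description gives, for a profile curve $\gamma\colon\Sph^1\to\Sph^2$ in the normal gauge, a Hopf torus parametrized as $F(x,\theta)=\widehat\gamma(x)\cdot e^{i\theta}$. Here $\widehat\gamma$ is a horizontal lift of $\gamma$ to $\Sph^3$, and $\theta$ runs over the Hopf fibre, a great circle of length $2\pi$.

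I would fix the conventions so that $\pi\colon\Sph^3\to\Sph^2$ is a Riemannian submersion onto the sphere of radius $1/2$, which is the normalization behind $|A^0|^2=2(1+\kappa^2)$ in \cite[Proposition~4.3]{Jakob}. Three facts follow.
\begin{enumerate}[label=\textup{(\roman*)}]
 \item The induced area element factorizes as $\dd\mu_t=\dd s_{\widehat\gamma_t}\,\dd\theta$.
 \item The horizontal lift is a local isometry onto the horizontal distribution, up to the constant factor $c$ fixed by the radius convention. Hence $\dd s_{\widehat\gamma_t}=c\,\dd s_{\gamma_t}$.
 \item The normal velocity satisfies $\partial_tF_t=(\partial_t\widehat\gamma_t)e^{i\theta}$. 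Its normal component is horizontal, has length $c\,|\partial_t\gamma_t|$, and is independent of $\theta$. The $S^1$-action by $e^{i\theta}$ is isometric, so fibre translation does not change lengths.
\end{enumerate}

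From these, integrating over $\theta$ gives
\[
 \|\partial_tF_t\|_{L^2(\dd\mu_t)}^2=2\pi\,c^3\,\|\partial_t\gamma_t\|_{L^2(\dd s_{\gamma_t})}^2,
\]
and the constant must come out to $\pi/4$. That is $c^3\cdot 2\pi=\pi/4$, so $c=1/2$. This value is consistent with projecting to the sphere of radius $1/2$, where horizontal lengths in $\Sph^3$ equal $2\times$ (lengths on the radius-$\tfrac12$ sphere). Here, however, $\gamma$ is measured on the unit $\Sph^2$, so the horizontal length equals half of it. This gives $c=1/2$ for both the velocity and the arclength factor, and hence the factor $\pi/4$ and the claimed $2/\sqrt\pi$.

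The main obstacle is justifying that the MIWF velocity, which is normal only after the time-dependent domain reparametrization mentioned in Corollary~\ref{cor:hopf}, matches exactly the horizontal lift of the normal profile velocity. Along the MIWF itself, $\partial_tF_t$ is normal to the surface by \eqref{eq:miwf}. The reparametrized Hopf map differs from it by a tangential field. I would use the equivariance and reduction formulas of \cite{JakobHopf} (the unaffected parts) to show that the Hopf-parametrized family, composed with the reparametrization, has normal velocity equal to the horizontal lift of $\partial_t\gamma_t$; the normality of $\partial_t\gamma_t$ in $\Sph^2$ gives normality of the lift in the torus. Since both $\partial_tF_t$ and this lift are the normal velocity of the same geometric family, and the normal velocity is parametrization-invariant as a function on the image, the $L^2$ norms agree. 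The identity then follows from the fibre integration above. I expect the bookkeeping of the constant $c$ (the sphere radius and the fibre length) to be the delicate point, and I would cross-check it against $\Area(F)=\pi L(\gamma)$, the standard Pinkall area formula. With $c=1/2$ that formula reads $2\pi\cdot\tfrac12 L=\pi L$, which confirms the constant.
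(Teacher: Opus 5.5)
Your proposal is correct. Its overall structure matches the paper's: a pointwise identity $|\partial_tF_t|=\frac12|\partial_t\gamma_t|$, followed by fibre integration with $\int_\Sigma h\dd\mu_t=\pi\int_{\Sph^1}h\dd s_{\gamma_t}$.

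You obtain the pointwise identity by a different route. The paper reads it off the reduced evolution equations: $\partial_t\gamma_t=-J_t/(1+\kappa_{\gamma_t}^2)^2$, $D\pi(G_t)=4J_t$, $|A^0_t|^2=2(1+\kappa_{\gamma_t}^2)$ and $|D\pi(v)|=2|v|$ together give $|\partial_tF_t|=|A^0_t|^{-4}|G_t|=\frac12|\partial_t\gamma_t|$.

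You use only kinematics. Write $F_t=\widehat F_t\circ\Phi_t$, with $\widehat F_t$ the Hopf immersion over $\gamma_t$. The normal MIWF velocity is then $(\partial_t\widehat F_t)^\perp\circ\Phi_t$. This vector is horizontal, and $D\pi$ maps it onto the normal part of $\partial_t\gamma_t$, which is $\partial_t\gamma_t$ itself in the normal gauge.

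For this step you do not need the reduction formulas of \cite{JakobHopf} that you mention. It suffices to state three facts:
\begin{itemize}
\item the torus normal is horizontal and orthogonal to the horizontal tangent $\partial_x\widehat F_t$;
\item $D\pi$ is twice an isometry on horizontal spaces, so it preserves orthogonality;
\item the tangential part of $\partial_t\widehat F_t$ projects into $\R\,\partial_x\gamma_t$.
\end{itemize}

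Your route shows that the identity follows purely geometrically from $F_t(\Sigma)=\pi^{-1}(\gamma_t(\Sph^1))$ and the normality of both velocities. It does not depend on the normalization of the gradients or on the weight in the reduced flow. The paper's route is immediate once the cited formulas are accepted, and it implicitly checks that those formulas are mutually consistent.

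Some points in your write-up need correcting.
\begin{itemize}
\item Determining $c$ from ``the constant must come out to $\pi/4$'' is circular, so delete it. The value $c=\frac12$ follows independently from $|D\pi(v)|=2|v|$ for horizontal $v$, with $\gamma_t$ measured in the unit $\Sph^2$; this is the paper's \eqref{eq:hopf-horizontal-dilation}, and you also state it. Equivalently, it follows from $\Area=\pi L$ and fibre length $2\pi$.
\item Your sentence that horizontal lengths equal twice the lengths on the radius-$\frac12$ sphere is wrong. For the Riemannian submersion onto $\Sph^2(\frac12)$ the two lengths coincide, and both are half the lengths on the unit sphere.
\item Your remark that radius $\frac12$ is ``the normalization behind'' $|A^0|^2=2(1+\kappa^2)$ is also inaccurate. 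That formula holds with $\kappa$ measured in the unit $\Sph^2$.
\item The horizontal lift of a closed profile generally does not close up, because of nontrivial holonomy. The parametrization $(x,\theta)\mapsto\widehat\gamma(x)e^{i\theta}$ should therefore be integrated over a fundamental domain $[0,1)\times[0,2\pi)$. This leaves your computation $2\pi c^3=\pi/4$ unchanged.
\end{itemize}
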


\begin{proof}
Put \(J_t:=\nabla_{L^2}E(\gamma_t)\) and
\(G_t:=\nabla_{L^2}\Will(F_t)\).  Propositions~4.3--4.4 of
\cite{Jakob} give, in the normal gauges,
\[
 \partial_t\gamma_t
 =-\frac{J_t}{(1+\kappa_{\gamma_t}^2)^2},
 \qquad
 D\pi_{F_t}(G_t)=4J_t,
 \qquad
 |A_t^0|^2=2(1+\kappa_{\gamma_t}^2).
\]
In the standard normalization used here, the Hopf map multiplies the
lengths of horizontal vectors by $2$:
\begin{equation}
 \label{eq:hopf-horizontal-dilation}
 |D\pi(v)|=2|v|
 \qquad\text{for every horizontal vector }v.
\end{equation}
The normal to a Hopf torus is horizontal, so the middle identity above
and \eqref{eq:hopf-horizontal-dilation} give \(|G_t|=2|J_t|\).
Consequently the normal speeds agree pointwise along a horizontal
lift up to the constant factor $2$:
\[
 |\partial_tF_t|
 =|A_t^0|^{-4}|G_t|
 =\frac{|J_t|}{2(1+\kappa_{\gamma_t}^2)^2}
 = \frac12\, |\partial_t\gamma_t|.
\]
This statement is geometric and is unchanged by the
time-dependent domain reparametrization relating the reconstructed
Hopf immersion to \(F_t\).  Finally, each Hopf fibre has length
\(2\pi\), whereas a horizontal lift of a profile tangent has half its
length by \eqref{eq:hopf-horizontal-dilation}.  Thus the fibre-integral
factor in the area element is \(\pi\), and accordingly Proposition~4.3 of
\cite{Jakob} gives
\[
 \int_\Sigma h\,\dd\mu_t
 =\pi\int_{\Sph^1}h\,\dd s_{\gamma_t}
\]
for every function \(h\) pulled back from the profile.  Taking
\(h=\frac14|\partial_t\gamma_t|^2\) proves
\eqref{eq:profile-surface-speed}.
\end{proof}

\begin{theorem}[Full convergence of a fixed anchored finite-time Hopf profile]
\label{thm:anchored-profile}
Under the hypotheses of Corollary~\ref{cor:hopf}, fix the oriented
normal gauge and the material anchor introduced above, and let
\(\widetilde\gamma_t\) be the anchored constant-speed profile
\eqref{eq:anchored-gauge}.  Set
\begin{equation}
 \label{eq:gauge-constant}
 C_{\mathrm g}:=\left(\frac2\pi+64\right)^{1/2}.
\end{equation}
Then, for \(0\leq s<t<T\),
\begin{equation}
 \label{eq:anchored-profile-cauchy}
 \|\widetilde\gamma_t-\widetilde\gamma_s\|_{L^2(\Sph^1)}
 \leq
 \frac{C_{\mathrm g}}{\sqrt\pi}
 \sqrt{(t-s)\bigl(W(s)-W(t)\bigr)} .
\end{equation}
There are an embedded regular constant-speed curve
\(\gamma_T\in W^{2,2}(\Sph^1,\Sph^2)\cap C^1(\Sph^1,\Sph^2)\)
and a number \(L_T\in[\pi,8]\) such that, as \(t\nearrow T\),
\begin{align}
 \widetilde\gamma_t&\rightharpoonup\gamma_T
 &&\text{weakly in }W^{2,2}(\Sph^1,\R^3),\label{eq:profile-weak}\\
 \widetilde\gamma_t&\longrightarrow\gamma_T
 &&\text{strongly in }W^{1,2}(\Sph^1,\R^3),\label{eq:profile-w12}\\
 \widetilde\gamma_t&\longrightarrow\gamma_T
 &&\text{strongly in }C^{1,\alpha}(\Sph^1,\R^3),
   \quad 0<\alpha<\tfrac12,\label{eq:profile-strong}\\
 L(\widetilde\gamma_t)&\longrightarrow L_T=L(\gamma_T),
 \qquad |\partial_x\gamma_T|=L_T .\label{eq:length-limit}
\end{align}
The trace of \(\gamma_T\) is the embedded \(C^1\) profile
\(\pi(\spt\Var_T)\) of the unique limiting Hopf torus.
\end{theorem}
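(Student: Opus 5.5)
The plan is to combine the anchored gauge estimate of Lemma~\ref{lem:anchored-gauge}, the profile/surface speed identity of Lemma~\ref{lem:profile-surface-speed}, and the velocity bound of Lemma~\ref{lem:velocity} with \(c_0=2\) (available by \eqref{eq:hopf-nonumbilic}). Compactness will then upgrade \(L^2\)-Cauchy control to the stated topologies.

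\emph{Step 1 (Cauchy estimate).} The normal-gauge profile \(\gamma_t\) is a normal family of closed curves in \(\Sph^2\subset\R^3\). I apply Lemma~\ref{lem:anchored-gauge} with \(d=3\), using Euclidean curvature in \(\R^3\). Then \(|\vec\kappa|^2=1+\kappa_\gamma^2\), so \(E_{\R^3}(\gamma_t)=\frac12E(\gamma_t)\). Since \(\pi(E(\gamma))=\Will(F)\) up to the factor from Proposition~4.3 of \cite{Jakob}, I expect \(E(\gamma_t)=W(t)/\pi\leq 8\). Also \(L(\gamma_t)\geq\pi\), because the enclosed areas force the profile to be at least as long as a great-circle-type bound; more simply, \(\pi L\leq\Area\) combined with Li--Yau/Willmore-type lower bounds. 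I will use \(L\geq\pi\) so that \(2/L\leq 2/\pi\). The crude bound \(16E_{\R^3}\leq 64\) gives the constant \(C_{\mathrm g}^2=2/\pi+64\).

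Lemma~\ref{lem:profile-surface-speed} then gives \(\|\partial_t\gamma_t\|_{L^2(\dd s)}=\frac{2}{\sqrt\pi}\|\partial_tF_t\|_{L^2(\dd\mu_t)}\). Lemma~\ref{lem:velocity} with \(c_0=2\) bounds the latter by \(\frac12\sqrt{-W'}\). Hence \(\|\partial_t\widetilde\gamma_t\|_{L^2}\leq \frac{C_{\mathrm g}}{\sqrt\pi}\sqrt{-W'(t)}\). Integrating in time, applying Minkowski, and then Cauchy--Schwarz yields \eqref{eq:anchored-profile-cauchy}. So \(\widetilde\gamma_t\) is Cauchy in \(L^2\) as \(t\nearrow T\), with a limit \(\gamma_T\in L^2\).

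\emph{Step 2 (uniform bounds and compactness).} For constant-speed curves, \(|\partial_x^2\widetilde\gamma_t|=L^2|\vec\kappa|\). Therefore \(\|\partial_x^2\widetilde\gamma_t\|_{L^2(\dd x)}^2=L^3\int|\vec\kappa|^2\dd s\leq L^3E\). The length satisfies \(L\leq E\leq 8\), since \(E\geq L\); this gives \(L_T\in[\pi,8]\). So \(\widetilde\gamma_t\) is bounded in \(W^{2,2}\). Every sequence has a subsequence converging weakly in \(W^{2,2}\) and strongly in \(C^{1,\alpha}\) for \(\alpha<\frac12\) (compact Sobolev embedding), and hence strongly in \(W^{1,2}\). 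The \(L^2\) limit identifies every subsequential limit with \(\gamma_T\), so the full family converges in all three senses, giving \eqref{eq:profile-weak}--\eqref{eq:profile-strong}. \(C^1\) convergence of \(|\partial_x\widetilde\gamma_t|=L(\gamma_t)\) gives convergence of the lengths and \(|\partial_x\gamma_T|=L_T\geq\pi>0\), so \(\gamma_T\) is regular, of constant speed, and lies in \(\Sph^2\).

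\emph{Step 3 (identification and embeddedness).} The Hopf tori \(F_t(\Sigma)=\pi^{-1}(\widetilde\gamma_t(\Sph^1))\) converge in Hausdorff distance to \(\pi^{-1}(\gamma_T(\Sph^1))\), by uniform convergence of the profiles and continuity of \(\pi^{-1}\) on compact sets. By Corollary~\ref{cor:hausdorff} they also converge to \(\spt\Var_T\). Hence \(\pi(\spt\Var_T)\) is the trace of \(\gamma_T\), and by Corollary~\ref{cor:hopf} it is an embedded \(C^1\) curve. Injectivity of \(\gamma_T\) on \(\Sph^1\) should follow from this plus regularity and degree considerations: the \(C^1\)-limit of embedded curves covering an embedded curve with the limiting length. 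I expect this step to be the main obstacle, namely excluding multiple covering of the limiting trace. My first attempt is to compare \(L_T\) with the length of the trace using the measure identity \eqref{eq:limit-measure} and unit density, \(\mathcal H^2(\spt\Var_T)=\pi\,\mathcal H^1(\text{trace})\) versus \(\lambda_T(\Sph^3)=\pi L_T\). This forces multiplicity one and hence injectivity of the immersion \(\gamma_T\). A secondary worry is verifying the precise lower bound \(L\geq\pi\) and the normalization \(E=W/\pi\).
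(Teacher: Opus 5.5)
Your Steps 1 and 2 match the paper's proof. The same chain of Lemma~\ref{lem:anchored-gauge}, Lemma~\ref{lem:profile-surface-speed} and $|A_t^0|^2\geq2$ gives $\|\partial_t\widetilde\gamma_t\|_{L^2}\leq (C_{\mathrm g}/\sqrt\pi)\sqrt{-W'(t)}$. The identity $\|\partial_x^2\widetilde\gamma_t\|_{L^2}^2=L^3E\leq8^4$, with subsequential limits identified through the $L^2$ limit, then gives \eqref{eq:profile-weak}--\eqref{eq:length-limit}.

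The one input you do not justify is $L(\gamma_t)\geq\pi$, which you need both for $C_{\mathrm g}$ and for regularity of $\gamma_T$. Your sketched reasons do not work: neither isoperimetry nor Li--Yau gives a length or area lower bound by itself, since small circles are short. The paper imports $\pi\leq L(\gamma_t)\leq E(\gamma_t)\leq8$ from Proposition~4.6 of \cite{Jakob}. A self-contained substitute is Fenchel's theorem in $\R^3$ plus Cauchy--Schwarz: $2\pi\leq\int\sqrt{1+\kappa_\gamma^2}\dd s\leq\sqrt{L(\gamma)E(\gamma)}$, which with $E\leq8$ gives $L(\gamma_t)\geq\pi^2/2>\pi$.

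Where you genuinely depart from the paper is the proof that $\gamma_T$ traverses $C:=\gamma_T(\Sph^1)$ only once.

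The paper argues topologically. For $t$ near $T$ the embedded curves $\widetilde\gamma_t$ lie in an annular neighbourhood $U$ of $C$. A retraction $P\colon U\to C$ gives $\deg(P\circ\widetilde\gamma_t)=m$, where $m$ is the covering degree of $\gamma_T$. An essential embedded circle in an annulus has degree $\pm1$, so $m=1$.

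Your mass comparison is also correct once its implicit steps are written out:
\begin{enumerate}
 \item Testing the $d_{\BL}$ convergence with $\varphi\equiv1$ and using $\Area(F_t)=\pi L(\gamma_t)$ gives $\lambda_T(\Sph^3)=\pi L_T$.
 \item Unit density \eqref{eq:limit-measure}, together with $\spt\Var_T=\pi^{-1}(C)$ and the fibre factor $\pi$, gives $\lambda_T(\Sph^3)=\pi\,\mathcal H^1(C)$.
 \item A regular $C^1$ map of $\Sph^1$ onto an embedded $C^1$ circle is an $m$-fold covering. Hence $L_T=m\,\mathcal H^1(C)$, and so $m=1$.
\end{enumerate}

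The trade-off is this. Your route needs no annulus topology or orientation bookkeeping, and it shows directly that the covering multiplicity of the profile equals the density of $\Var_T$. It does, however, rely on the unit-density conclusion imported from Jakob's compactness theorem. The paper's route uses only $C^1$ convergence and embeddedness of the approximating profiles, so it needs no density information about the limit.

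Identifying the trace through Hausdorff convergence of the preimages $\pi^{-1}(\widetilde\gamma_t(\Sph^1))$, rather than of their projections, is an inessential variant of the paper's step.
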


\begin{proof}
Proposition~4.6 of \cite{Jakob} gives, for \(0\leq t<T\),
\begin{equation}
 \label{eq:hopf-energy-length-bounds}
 \pi\leq L(\gamma_t)\leq E(\gamma_t)\leq E(\gamma_0)\leq8 .
\end{equation}
For a curve in the unit sphere,
\[
 \vec\kappa_{\R^3}=-\gamma+\kappa_\gamma\nu_\gamma,
 \qquad
 E_{\R^3}(\gamma)=\frac12E(\gamma).
\]
Hence Lemma~\ref{lem:anchored-gauge},
\eqref{eq:hopf-energy-length-bounds}, and
Lemma~\ref{lem:profile-surface-speed} imply
\begin{equation}
 \label{eq:anchored-speed-bound}
 \|\partial_t\widetilde\gamma_t\|_{L^2(\dd x)}
 \leq\frac{2C_{\mathrm g}}{\sqrt\pi}
 \|\partial_tF_t\|_{L^2(\dd\mu_t)} .
\end{equation}
On the other hand, \eqref{eq:hopf-nonumbilic} and the dissipation
identity \eqref{eq:dissipation} give
\[
 \|\partial_tF_t\|_{L^2(\dd\mu_t)}^2
 =\int_\Sigma|A_t^0|^{-8}|G_t|^2\dd\mu_t
 \leq\frac14\int_\Sigma|A_t^0|^{-4}|G_t|^2\dd\mu_t
 =-\frac14W'(t).
\]
Integrating \eqref{eq:anchored-speed-bound} on \([s,t]\) and applying
Cauchy--Schwarz proves \eqref{eq:anchored-profile-cauchy}.  Since
\(T<\infty\) and \(W\) is decreasing and bounded below, the right-hand
side tends to zero as \(s,t\nearrow T\).  Thus
\(\widetilde\gamma_t\) is Cauchy in \(L^2\), with a unique limit
\(\gamma_T\).

It remains to upgrade the convergence.  By
\eqref{eq:constant-profile-speed} and the sphere Frenet formula,
\[
 \partial_x^2\widetilde\gamma_t
 =L(\widetilde\gamma_t)^2
   \bigl(-\widetilde\gamma_t+
       \kappa_{\widetilde\gamma_t}
       \nu_{\widetilde\gamma_t}\bigr),
\]
and therefore
\begin{equation}
 \label{eq:profile-w22-bound}
 \|\partial_x^2\widetilde\gamma_t\|_{L^2(\dd x)}^2
 =L(\widetilde\gamma_t)^3E(\widetilde\gamma_t)\leq8^4 .
\end{equation}
Together with \eqref{eq:constant-profile-speed}, this is a uniform
\(W^{2,2}\) bound.  Every sequence
\(\{\widetilde\gamma_{t_j}\}\) with \(t_j\nearrow T\) consequently
has a subsequence converging weakly in \(W^{2,2}\) and strongly in
\(W^{1,2}\) and \(C^{1,\alpha}\), for every
\(0<\alpha<\frac12\).  Its \(L^2\) limit must be the already fixed
\(\gamma_T\).  A contradiction argument with an arbitrary sequence
therefore proves the full convergences
\eqref{eq:profile-weak}--\eqref{eq:profile-strong}.

Uniform convergence of the derivatives and
\eqref{eq:constant-profile-speed} show that
\(L(\widetilde\gamma_t)\) converges to a number \(L_T\) and that
\(|\partial_x\gamma_T|=L_T\).  The bounds
\eqref{eq:hopf-energy-length-bounds} give \(L_T\in[\pi,8]\), so the
limit is regular and \(L_T=L(\gamma_T)\).

Finally, for every \(0\leq t<T\),
\[
 F_t(\Sigma)=\pi^{-1}\bigl(\gamma_t(\Sph^1)\bigr).
\]
The strong \(C^1\) profile convergence gives Hausdorff convergence of
the projected traces to \(\gamma_T(\Sph^1)\).  Corollary~\ref{cor:hopf}
and continuity of the Hopf map give convergence of the same traces to
\(\pi(\spt\Var_T)\).  Hence
\[
 \gamma_T(\Sph^1)=\pi(\spt\Var_T).
\]
The right-hand side is the embedded \(C^1\) profile supplied by
\cite[Theorem~1.2(1)]{Jakob}.  It remains to verify that
\(\gamma_T\) traverses this profile exactly once.  Put
\(C:=\gamma_T(\Sph^1)\).  Since \(\gamma_T\) is regular and \(C\) is
an embedded circle, \(\gamma_T\colon\Sph^1\to C\) is a finite covering
of some positive degree \(m\).  Choose an annular neighborhood \(U\)
of the embedded \(C^1\) circle \(C\) in \(\Sph^2\), together with a
deformation retraction \(P\colon U\to C\).  The strong \(C^1\)
convergence implies that, for all
\(t\) sufficiently close to \(T\), the embedded curve
\(\widetilde\gamma_t(\Sph^1)\) lies in \(U\) and
\(P\circ\widetilde\gamma_t\) is homotopic in \(C\) to
\(P\circ\gamma_T=\gamma_T\).  Thus
\[
 \deg(P\circ\widetilde\gamma_t)=m.
\]
This degree is nonzero, so \(\widetilde\gamma_t(\Sph^1)\) is an
essential embedded circle in the annulus \(U\).  The homotopy class of
an essential embedded circle in an annulus is primitive in
\(\pi_1(U)\simeq\mathbb Z\); hence its degree is \(1\) or \(-1\).
The consistently chosen orientation makes it positive.  Therefore
\(m=1\), and \(\gamma_T\) is an embedding which traverses \(C\)
exactly once.  This completes the identification.
\end{proof}

\begin{remark}[No unproved strong-curvature conclusion]
\label{rem:no-strong-profile}
Theorem~\ref{thm:anchored-profile} does not assert strong
\(W^{2,2}\) convergence.  The estimate determines the full first-order
profile limit, while \eqref{eq:profile-w22-bound} gives only weak
compactness of the curvature.  Strong \(W^{2,2}\) convergence would
require an additional no-defect or endpoint energy identity.
\end{remark}

\begin{corollary}[Unique Pinkall-modulus limit]
\label{cor:pinkall-modulus}
Under the hypotheses of Theorem~\ref{thm:anchored-profile}, orient the
profiles consistently with the reduced flow.  Let
\(\mathcal A_t\in\R/4\pi\mathbb Z\) be the oriented area class
enclosed by \(\gamma_t\), and let \(\mathcal A_T\) be the corresponding area class for
\(\gamma_T\).  Then
\begin{equation}
 \label{eq:area-class-limit}
 \mathcal A_t\longrightarrow\mathcal A_T
 \qquad\text{in }\R/4\pi\mathbb Z .
\end{equation}
For \(t\) sufficiently close to \(T\), choose compatible real
representatives such that \(\mathcal A_t\to\mathcal A_T\).  Pinkall's
lattice is
\begin{equation}
 \label{eq:pinkall-lattice}
 \Gamma_t=
 \operatorname{span}_{\mathbb Z}
 \left\{(2\pi,0),
 \left(\frac{\mathcal A_t}{2},\frac{L(\gamma_t)}2\right)\right\},
\end{equation}
and the corresponding modulus is
\begin{equation}
 \label{eq:pinkall-modulus}
 \tau_t=\frac{\mathcal A_t+iL(\gamma_t)}{4\pi}\in\mathbb H .
\end{equation}
It satisfies
\[
 \tau_t\longrightarrow
 \tau_T:=\frac{\mathcal A_T+iL_T}{4\pi}.
\]
Its imaginary part satisfies the quantitative bounds
\begin{equation}
 \label{eq:modulus-imaginary-bounds}
 \frac14\leq\operatorname{Im}\tau_T\leq\frac2\pi.
\end{equation}
Consequently the conformal classes
\[
 [\tau_t]\longrightarrow[\tau_T]
 \qquad\text{in }\mathcal M_1=\mathbb H/\mathrm{PSL}_2(\mathbb Z)
\]
have a unique finite-time limit, independent of the approaching time
sequence.  With the orientation induced by the fixed oriented profile
and the positively oriented Hopf fibres, the class \([\tau_T]\) is the
intrinsic oriented conformal class of the limiting \(C^1\) Hopf torus.
In particular, no conformal
degeneration occurs at the finite endpoint.
\end{corollary}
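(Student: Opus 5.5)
The plan is to derive everything from the strong $C^1$ (indeed $C^{1,\alpha}$) convergence $\widetilde\gamma_t\to\gamma_T$ in Theorem~\ref{thm:anchored-profile} together with $L(\gamma_t)\to L_T$. No further flow estimate is needed.

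\emph{Step 1: the area class.} For a closed curve in $\Sph^2$, the oriented enclosed area modulo $4\pi$ can be written as a line integral. Away from a pole $p$ not on the curve, take a primitive $\omega$ of the area form on $\Sph^2\setminus\{p\}$ and set $\int_\gamma\omega$. Changing the pole changes the value by a multiple of $4\pi$, which is exactly why only the class in $\R/4\pi\mathbb Z$ is well defined. Since $\mathcal A_t$ depends only on the oriented trace, we may compute it with $\widetilde\gamma_t$ in place of $\gamma_t$. The trace $\gamma_T(\Sph^1)$ is compact, so we choose $p\notin\gamma_T(\Sph^1)$. By uniform convergence, $p\notin\widetilde\gamma_t(\Sph^1)$ for $t$ near $T$. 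Then $\int_{\widetilde\gamma_t}\omega=\int_0^1\omega(\widetilde\gamma_t)[\partial_x\widetilde\gamma_t]\dd x$, and this converges to the same expression for $\gamma_T$ by $C^1$ convergence. This proves \eqref{eq:area-class-limit}, with compatible real representatives given by this fixed-pole integral.

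\emph{Step 2: the modulus and its bounds.} Convergence of $\tau_t$ follows immediately from Step~1 and \eqref{eq:length-limit}. The imaginary bound $\operatorname{Im}\tau_T=L_T/(4\pi)\in[\pi,8]/(4\pi)=[1/4,2/\pi]$ comes from $L_T\in[\pi,8]$ in Theorem~\ref{thm:anchored-profile}. We then use that the quotient map $\mathbb H\to\mathcal M_1$ is continuous, so $[\tau_t]\to[\tau_T]$. This limit is independent of the approaching time sequence because the anchored profile limit itself is sequence independent.

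\emph{Step 3: identifying $[\tau_T]$ as the conformal class of the limiting torus. This is the main obstacle.} Pinkall's theorem computes the conformal lattice \eqref{eq:pinkall-lattice} of a \emph{smooth} Hopf torus from the length and enclosed area of its profile. Here, however, $\gamma_T$ is only $W^{2,2}\cap C^1$. I would redo Pinkall's construction at this regularity. Parametrize $\pi^{-1}(\gamma_T)$ by $(s,\theta)\mapsto e^{i\theta}\hat\gamma_T(s)$, where $\hat\gamma_T$ is a horizontal lift of the arclength profile; this lift is $C^1$ because $\gamma_T$ is $C^1$ and regular. By \eqref{eq:hopf-horizontal-dilation} and the orthogonality of horizontal and fibre directions, this map is conformal with a constant factor. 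The map is $2\pi$-periodic in $\theta$. After one turn in $s$, the lift closes up to a fibre rotation, and that rotation is the holonomy $\mathcal A_T/2$ by the curvature of the Hopf connection. This holonomy identity is a Stokes-type statement that remains valid for $C^1$ curves, as one can see by approximating and using Step~1's continuity. The result is a $C^1$ conformal diffeomorphism from $\R^2/\Gamma_T$ onto $\spt\Var_T$, using Theorem~\ref{thm:anchored-profile}, which identifies the trace with $\pi(\spt\Var_T)$ traversed once. The orientation is fixed by the oriented profile and the positively oriented fibres. A conformal class is determined by any $C^1$ conformal parametrization, so $[\tau_T]$ is the intrinsic class. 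Finally, since $\operatorname{Im}\tau_T\ge1/4>0$, the limit lies in $\mathbb H$, which gives no conformal degeneration.
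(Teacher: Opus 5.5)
Your proposal is correct and follows essentially the paper's route. $C^1$ profile convergence gives convergence of the area class; the paper phrases this through continuity of the Pinkall-connection holonomy $\delta_\gamma=\mathcal A_\gamma/2$, and your fixed-pole line integral of a primitive of the area form is the same quantity written in a trivialization. After that, $L_t\to L_T$, the bound $L_T\in[\pi,8]$, continuity of $\mathbb H\to\mathcal M_1$, and a redo of Pinkall's covering computation for the $C^1$ horizontal lift finish the argument. One small slip: in profile arclength $s$ the map $(s,\theta)\mapsto e^{i\theta}\hat\gamma_T(s)$ pulls back $\frac14\dd s^2+\dd\theta^2$, so it is not conformal. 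It becomes an isometry in the horizontal arclength $y=s/2$, which is the coordinate in which $\Gamma_T$ with second generator $(\mathcal A_T/2,L_T/2)$ is written.
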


\begin{proof}
Pinkall equips the Hopf bundle with the connection whose horizontal
spaces are orthogonal to the fibres.  Its curvature is
\(\frac12\dd V_{\Sph^2}\), and the holonomy angle \(\delta_\gamma\)
of an oriented closed profile satisfies
\[
 \delta_\gamma=\frac{\mathcal A_\gamma}{2}\pmod{2\pi}.
\]
Parallel transport for a fixed smooth connection depends continuously
on the path in the \(C^1\) topology.  Therefore
\eqref{eq:profile-strong} implies
\[
 \exp\!\left(\frac{i\mathcal A_t}{2}\right)
 \longrightarrow
 \exp\!\left(\frac{i\mathcal A_T}{2}\right),
\]
which is precisely \eqref{eq:area-class-limit}.  A local lift of the
covering \(\R\to\R/4\pi\mathbb Z\) supplies compatible representatives
with \(\mathcal A_t\to\mathcal A_T\) on a terminal time interval.

Proposition~1 of \cite{Pinkall} states that the flat Hopf torus over a
closed profile of length \(L\) and oriented area \(\mathcal A\) is
\(\R^2/\Gamma\), where \(\Gamma\) is generated by
\((2\pi,0)\) and \((\mathcal A/2,L/2)\).  Identifying \(\R^2\) with \(\mathbb C\)
and dividing the second generator by the first gives
\(\tau=(\mathcal A+iL)/(4\pi)\).  Changing the representative of
\(\mathcal A\) by
\(4\pi\) replaces \(\tau\) by \(\tau+1\) and therefore does not change
its class in \(\mathcal M_1\).  The convergence of the area classes,
together with \eqref{eq:length-limit}, proves convergence of the
moduli.  The bounds \eqref{eq:modulus-imaginary-bounds} follow at once
from $L_T\in[\pi,8]$.

For completeness, let \(\eta_T\) be a \(C^1\) horizontal lift of the
regular constant-speed profile \(\gamma_T\), parametrized by horizontal
arc length \(y\in\R\).  In fibre-angle and horizontal arc-length
coordinates, the map
\[
 X_T(y,\theta):=e^{i\theta}\eta_T(y)
\]
is a \(C^1\) local isometry from the Euclidean plane onto the limiting
Hopf torus.  Its deck transformations are determined by the fibre
period \(2\pi\) and the terminal holonomy
\(\mathcal A_T/2\pmod{2\pi}\).  With the connection and orientation
conventions fixed above, their lattice is therefore exactly
\[
 \Gamma_T=\operatorname{span}_{\mathbb Z}
 \left\{(2\pi,0),
 \left(\frac{\mathcal A_T}{2},\frac{L_T}{2}\right)\right\}.
\]
Thus Pinkall's covering calculation extends directly to the regular
\(C^1\) profile and identifies \([\tau_T]\) with the intrinsic oriented
conformal class of the induced metric on the limiting Hopf torus.

One should also note that the full convergence
\([\tau_t]\longrightarrow[\tau_T]\) of the conformal classes in moduli
space can alternatively be inferred from Proposition~3.1 of
\cite{SchaetzleConformalFactor}, in combination with
Corollary~\ref{cor:hopf} and the estimates
\eqref{eq:hopf-energy-length-bounds} alone.  This alternative argument
does not require any of the additional convergences
\eqref{eq:profile-weak}--\eqref{eq:length-limit} or
Lemmas~\ref{lem:anchored-gauge} and
\ref{lem:profile-surface-speed}.
\end{proof}

\begin{remark}[No restart conclusion in the Hopf sector]
The identity \eqref{eq:hopf-nonumbilic} holds along the smooth
trajectory.  The anchored-profile and modulus limits do not assert
that the \(C^1\) limiting support possesses a pointwise defined,
quantitatively nonvanishing trace-free second fundamental form, nor
that the MIWF can be restarted from the limiting parametrization.  The
continuation issues listed in Section~5 remain logically separate.
\end{remark}

\end{document}